\newtheorem{theorem}{Theorem}[section]
\newtheorem{lemma}[theorem]{Lemma}
\newtheorem*{theorem-non}{Theorem}
\newtheorem{definition}[theorem]{Definition}
\newtheorem{proposition}[theorem]{Proposition}
\newtheorem{corollary}[theorem]{Corollary}
 \DeclareMathSymbol{\N}{\mathbin}{AMSb}{"4E}
\DeclareMathSymbol{\Z}{\mathbin}{AMSb}{"5A}
\DeclareMathSymbol{\R}{\mathbin}{AMSb}{"52}
\DeclareMathSymbol{\Q}{\mathbin}{AMSb}{"51}
\DeclareMathSymbol{\I}{\mathbin}{AMSb}{"49}
\DeclareMathSymbol{\C}{\mathbin}{AMSb}{"43}
\def\M{{\mathbb M}}
\numberwithin{equation}{section}
\title{Real rectifiable currents, holomorphic chains and algebraic cycles }
\author{Jyh-Haur Teh, Chin-Jui Yang}
\date{}
\begin{document}
\maketitle

\begin{abstract}
We study some fundamental properties of real rectifiable currents and give a generalization of King's theorem in
characterizing currents defined by positive real holomorphic chains. Our proof uses Siu's semicontinuity
theorem and largely simplifies King's proof. A consequence of this result is a sufficient condition
for the Hodge conjecture.
\end{abstract}

MSC2020: 32U40, 14C25.

Keywords: real rectifiable current, real holomorphic chain, holomorphic subvariety, Hodge conjecture.

\section{Introduction}
Since the publication of the foundational paper ``Normal and integral currents"(\cite{FF60})
by Federer and Fleming, geometric measure theory becomes an important tool in many areas of mathematics (\cite{F69}).
One particular fascinating question to us is to characterize currents defined by analytic varieties,
or more generally, by holomorphic chains, which are some formal linear combination of analytic varieties.
The first major progress was made by King in his marvelous paper \cite{K71} where he proved that holomorphic chains with
positive integral coefficients are those $d$-closed rectifiable positive currents. Three years later Harvey and Shiffman
improved King's result (\cite{HS74}) showing that $d$-closed rectifiable currents of type $(k, k)$ with $(2k+1)$-Hausdorff
measure 0 support are integral holomorphic chains. They also conjectured that the condition on support is not necessary.
This conjecture was resolved about twenty years later by Alexander (\cite{A97}). So for holomorphic chains
with integral coefficients, the characterization is complete. For holomorphic chains with real coefficients, as far as we know,
not much is known. A significant difference lies in the fact that for an integral current $T$, its density $\Theta(||T||, x)$
is a nonnegative integer, but for a real rectifiable current $R$, its density $\Theta(||R||, x)$, is a nonnegative real number (\cite{F74}).
For example consider the real rectifiable current $\sum^{\infty}_{n=1}\frac{1}{2^n}[\frac{1}{n}]$ where $[\frac{1}{n}]$ is the current defined by the point $\frac{1}{n}$.
This current is obviously a $d$-closed, positive, type $(0, 0)$ real rectifiable current but it can not be a holomorphic $0$-chain on $\C$
since the sequence $\{\frac{1}{n}\}^{\infty}_{n=1}$ has a limit point 0 and hence it can not be a holomorphic subvariety of $\C$.
This means that the 3 conditions: $d$-closedness, real rectifiability and positivity
are not sufficient to characterize positive real holomorphic chains. The main point of this paper is to show that in addition to the three conditions
mentioned above, for a current $R$ to be a positive real holomorphic chain, we need an extra condition that the set of positive density
$N:=\{x|\Theta(||R||, x)>0\}$ is of $\mathcal{H}^{2k}$-locally finite. Restriction on supports also appears naturally
in studying plurisubharmonic positive currents (\cite{DL03, HL09I, HL09II}).
We then show that currents with these four properties are positive real holomorphic chains. The following is our main result (Theorem \ref{generalization of King}).

\begin{theorem-non}
If $T \in RR^{loc}_{k,k}(U)$  is  positive, closed and $N = \{ x \in U : \Theta^{2k}( \| T \| , x ) > 0 \} $
is $\mathcal{H}^{2k}$-locally finite, then $T \in\mathscr{RZ}^{+}_k(U)$.
\end{theorem-non}

Since the last condition is automatically satisfied by positive integral currents, our result not only generalizes King's result but also our proof largely simplifies King's proof. This simplification is possible because of our use of Siu's famous semicontinuity theorem (\cite{S74}). Techniques from geometric measure theory are already important tools in studying algebraic cycles(\cite{L89, FL92, H77, L75}). In our opinion, our characterization of real holomorphic chains may find important applications in studying the Hodge conjecture. We give a sufficient condition for homology classes that can be represented by algebraic cycles with rational coefficients on
complex projective manifolds. In a forthcoming paper (\cite{TY2}), we propose a version of the Hodge conjecture in Bott-Chern cohomology and use a generalized
result of this paper to give a proof.

This paper is organized as follows. In section 2, we study some fundamental properties of real rectifiable currents.
This includes an integral representation for real rectifiable currents which plays an important role in later development.
We show that a locally normal current with $\mathcal{H}^{2k}$-locally finite support is actually real rectifiable.
In section 3, we give a generalization and at the same time, a new proof of King's theorem. In section 4, we apply our result
to give a sufficient condition for homology classes to be represented by algebraic cycles with rational coefficients.



\section{Real rectifiable currents}
For $M$ a smooth oriented manifold, we denote by $A^r(M)$ the space of complex-valued smooth $r$-forms on $M$ and $A_c^r(M)$
the space of complex-valued $r$-forms with compact supports on $M$. Dually,
$\mathscr{D}^\prime_r(M)$ is the space of currents of dimension $r$ and $\mathcal{E}^\prime_r(M)$ is the space of currents with compact
supports.

\begin{definition}
Let $M$ be a smooth oriented manifold and $K \subset M$ be a compact set. A current $T\in \mathcal{E}'_r(M)$ is called a real rectifiable r-current on $M$ with support in $K$
if for every $\varepsilon>0$, there is
an open subset $U$ of some $\mathbb{R}^n$, a Lipschitz map $f : U \to M$ and a finite real polyhedral $r$-chain $P$ (in this article, we assume that simplices are nonoverlapping) with $f (\mbox{spt} P) \subset K$ such that the mass
$$\M(T - f_*(P))< \epsilon$$
Let $RR_{r, K}(M)$ be the space of real rectifiable $r$-currents on $M$ with supports in $K$ and the space of real rectifiable $r$-currents on $M$ is
$RR_r(M)=\bigcup_K RR_{r, K}(M)$, where the
union is taken over all compacta $K \subset M$. Locally real rectifiable $r$-currents on $M$ are elements of the set
$$RR_r ^{loc}(M):=\{T \in \mathscr{D}'_r(M) : \mbox{for $x \in M$,  there is $T_x \in RR_r(M)$ such that $x \notin spt(T - T_x)$}\}.$$
\end{definition}

We recall some definitions and results that we need later.

\begin{definition}
Suppose that $N$ is a $\mathcal{H}^n$-measurable subset of $\R^{n+k}$ and $\theta$ is a positive locally
$\mathcal{H}^n$-integrable function on $N$. We say that a given $n$-dimensional vector subspace $P$ of $\R^{n+k}$ is the
approximate tangent space for $N$ at $x$ with respect to $\theta$ if
for all $f \in A^0_c(\R^{n+k})$,
$$\lim_{\lambda\rightarrow 0} \lambda^{-n}
\int_N f(\lambda^{-1}(z-x))\theta(z)d\mathcal{H}^{n}(z) = \theta(x)\int_{P}f(y)d\mathcal{H}^{n}(y) $$
\end{definition}

The following result is \cite[Proposition 5.4.3]{KP08}.  We recall that a set $M\subset \R^{n+N}$
is countably $n$-rectifiable if there exists $n$-dimensional embedded $C^1$ submanifolds $N_1, N_2, .....$ and a set $N_0\subset \R^{n+N}$ with
Hausdorff $n$-measure $\mathcal{H}^n(N_0)=0$ such that
$$M\subset \bigcup^{\infty}_{k=0}N_k$$

\begin{proposition}\label{countably rectifiable}\label{same orient vector}
Suppose that $M\subset \R^{n+k}$ is $\mathcal{H}^n$-measurable and countably $n$-rectifiable. Then
$M=\bigcup_{j=0}^{\infty} S_j$
where
\begin{enumerate}
\item $\mathcal{H}^n(S_0) = 0$;
\item $S_i \bigcap S_j = \emptyset \mbox{ if } i\neq j$;
\item for $j \geq 1, S_j \subseteq N_j$ where $N_j$  is an  $n$-dimensional, embedded  $C^1$  submanifold  of
$\R^{n+k}$.
\end{enumerate}
\end{proposition}

Recall that a $\mathcal{H}^n$-measurable, $\Lambda_n(\R^{n+k})$-valued function $\xi$ is said to orient the approximate tangent space $T_xM$ of $M$ if there exist
orthonormal basis $\tau_1, ..., \tau_n$ of $T_xM$ such that
$$\xi(x)=\pm \tau_1\wedge \cdots \tau_n$$
for $\mathcal{H}^n$-a.e. $x\in M$.

The following result is \cite[Theorem 11.6]{L83}.

\begin{theorem}\label{approximate tangent plane}
Suppose that $N$ is $\mathcal{H}^n$-measurable. Then $N$ is countably $n$-rectifiable if and only if there is a positive
locally $\mathcal{H}^n$-integrable function $\theta$ on $N$ with respect to which the approximate tangent space $T_xN$ exists for
$\mathcal{H}^n$-almost every $x \in N$.
\end{theorem}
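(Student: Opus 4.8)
The plan is to prove both implications, using the $C^1$ decomposition of Theorem~\ref{countably rectifiable} for the forward direction and a blow-up/cone-concentration argument for the (harder) converse. Suppose first that $N$ is countably $n$-rectifiable, and write $N=\bigcup_{j\ge 0}S_j$ as in Theorem~\ref{countably rectifiable}: $\mathcal H^n(S_0)=0$, the $S_j$ are pairwise disjoint, and for $j\ge 1$ each $S_j\subseteq T_j$ with $T_j$ an embedded $C^1$ submanifold carrying a $C^1$ chart $f_j\colon U_j\to T_j$, and $T_xN=T_xT_j$ for a.e. $x\in S_j$. I would first choose constants $c_j>0$ and set $\theta:=\sum_{j\ge 1}c_j\chi_{S_j}$ (with $\theta$ any positive value on the null set $S_0$), taking the $c_j$ small enough — for instance $c_j\le 2^{-j}\big(1+\mathcal H^n(S_j\cap B_j(0))\big)^{-1}$ — so that $\int_{B_R}\theta\,d\mathcal H^n=\sum_j c_j\,\mathcal H^n(S_j\cap B_R)<\infty$ for every $R$; this makes $\theta$ positive and locally $\mathcal H^n$-integrable. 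Fixing $j$ and $x\in S_j$, I would split the defining integral over $N$ into the contributions of $S_j$, of $\bigcup_{i\ne j}S_i$, and of $S_0$. The $S_0$-piece vanishes since $\mathcal H^n(S_0)=0$. For the $S_j$-piece I would use the chart $f_j$: by the area formula the rescaled measures $\lambda^{-n}\,\mathcal H^n\lfloor T_j$ at $x$ converge to $\mathcal H^n\lfloor T_xT_j$, and since by the Lebesgue density theorem (read through $f_j$) almost every $x\in S_j$ has density one in $T_j$, the $S_j$-piece contributes exactly $c_j\int_{T_xT_j}f=\theta(x)\int_{T_xT_j}f$.

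The remaining point is that the off-diagonal piece is negligible after rescaling. For this I would apply the Besicovitch differentiation theorem to $\nu:=\sum_{i\ne j}c_i\,\mathcal H^n\lfloor S_i$, which is a locally finite Radon measure by the choice of $\theta$. Since $\nu$ is carried by $\bigcup_{i\ne j}S_i$, a set disjoint from $S_j$, the measures $\nu$ and $\mathcal H^n\lfloor S_j$ are mutually singular, so the density of $\nu$ with respect to $\mathcal H^n\lfloor S_j$ vanishes at $\mathcal H^n$-a.e. $x\in S_j$; combined with the density-one property of $S_j\subseteq T_j$ this yields $\lim_{\lambda\to0}\lambda^{-n}\nu(B_\lambda(x))=0$ for a.e. $x\in S_j$. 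This kills the off-diagonal term, so $T_xN=T_xT_j$ is indeed the approximate tangent space with respect to $\theta$ at a.e. $x\in S_j$, and since $j$ and $x$ were arbitrary the forward direction follows.

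For the converse, assume $\theta$ is positive, locally $\mathcal H^n$-integrable, and the approximate tangent space $P_x:=T_xN$ exists for a.e. $x$; put $\mu:=\theta\,\mathcal H^n\lfloor N$. Testing the defining limit against functions $f\in A^0_c$ supported away from the plane, I would convert it into the cone-concentration statement that for every $\alpha\in(0,1)$, $\lim_{\lambda\to0}\lambda^{-n}\mu\big(\{z\in B_\lambda(x):\operatorname{dist}(z-x,P_x)>\alpha|z-x|\}\big)=0$, together with the density lower bound $\Theta^{\,n}(\mu,x)=\theta(x)>0$. Then I would stratify the a.e.-defined set $\{x:P_x\text{ exists}\}$ into countably many pieces on which $\theta(x)$, the plane $P_x$ (up to a fine net of $n$-planes in the Grassmannian), and the scale at which the cone estimate becomes effective are all controlled. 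On each such piece the cone condition forces any two sufficiently close points of $N$ to be joined by a direction making a small angle with the fixed plane, so that piece, after deleting an $\mathcal H^n$-null set, is contained in the graph of a Lipschitz function over that plane; covering $N$ up to a null set by countably many such graphs gives countable $n$-rectifiability.

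The main obstacle is this last step of the converse: passing from the pointwise cone-concentration estimate to an honest Lipschitz-graph cover. One must choose the stratification and the scales uniformly enough that the cone condition holds simultaneously on a set of positive measure, and then check that the resulting graph is genuinely Lipschitz rather than merely graph-like at each point. The off-diagonal density estimate in the forward direction is the other delicate point, since $\bigcup_{i\ne j}S_i$ may have locally infinite $\mathcal H^n$-measure, and it is precisely the weight $\theta$ that restores the local finiteness needed to differentiate.
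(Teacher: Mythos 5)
The first thing to note is that the paper does not prove this theorem at all: it is quoted from Simon's lectures (\cite{L83}, Theorem 11.6), so there is no internal argument to compare against, and your outline is in substance the standard proof of that result. Your forward direction is essentially sound, with one repair needed. The choice $c_j\le 2^{-j}\bigl(1+\mathcal{H}^n(S_j\cap B_j(0))\bigr)^{-1}$ silently assumes $\mathcal{H}^n(S_j\cap B_j(0))<\infty$, which Theorem~\ref{countably rectifiable} does not guarantee (an embedded but not properly embedded $C^1$ submanifold $T_j$, hence also $S_j$, can have infinite $\mathcal{H}^n$-measure inside a fixed ball); moreover, even granting finiteness, your bound controls only the tail $j\ge R$ of $\int_{B_R}\theta\,d\mathcal{H}^n$ and says nothing about the finitely many terms with $j<R$. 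The fix is routine --- a countably $n$-rectifiable set is $\sigma$-finite for $\mathcal{H}^n$, so first refine the decomposition into countably many disjoint pieces of finite measure and then choose the constants --- and with it your two limit computations (blow-up of the $C^1$ piece at density points, and Besicovitch differentiation of the mutually singular off-diagonal measure) are correct.

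The genuine gap is in the converse, exactly where you flag it, and it is not a technicality: it is the entire content of the hard direction. Cone concentration, $\lambda^{-n}\mu(\{z\in B_\lambda(x):\mathrm{dist}(z-x,P_x)>\alpha|z-x|\})\to 0$, is a statement about measure and by itself puts no constraint on where the \emph{points} of $N$ sit; your assertion that it ``forces any two sufficiently close points of $N$ to be joined by a direction making a small angle with the fixed plane'' is false for arbitrary points, since a point of $N$ with lower density zero can lie deep inside the excluded cone while contributing negligible measure. The deduction must use the lower density bound at the \emph{second} point: stratify by Egorov so that on a set $E$ of positive measure one has $\theta\ge\theta_0>0$, the planes $P_x$ within a small angle of a fixed plane, and both the cone estimate at $x$ and the lower bound $\mu(B_r(y))\ge\tfrac12\theta_0\Omega(n)r^n$ effective at all scales $r<r_0$, uniformly for $x,y\in E$. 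Then if $x,y\in E$, $|x-y|<r_0/2$, and $\mathrm{dist}(y-x,P_x)>\alpha|x-y|$, the ball $B_{\alpha|x-y|/4}(y)$ is contained in $\{z\in B_{2|x-y|}(x):\mathrm{dist}(z-x,P_x)>\tfrac{3\alpha}{8}|z-x|\}$ yet carries $\mu$-measure at least $\tfrac12\theta_0\Omega(n)(\alpha|x-y|/4)^n$, contradicting cone concentration at $x$ once $|x-y|$ is small. This two-point inequality is what places $E$ inside a Lipschitz graph, and exhausting $N$ by countably many such strata finishes the proof. Until that argument is actually carried out, your proposal establishes only the easy half of the theorem.
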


In the following, we generalize the integral representation theorem (\cite[Theorem 8.16]{FF60}) for integral currents to real rectifiable
currents. This result plays a fundamental role in the later development.

\begin{theorem}\label{representation}
If $T \in RR_k(\mathbb{R}^n)$, then for all $\varphi \in A_c^k(\mathbb{R}^n)$,
$$T(\varphi) = \int_W \quad \langle \ \varphi(x), \
\overrightarrow{T}(x) \ \rangle \ \theta(x) \ d\mathcal{H}^k \quad $$ where $W$ is countably
$k$-rectifiable, $\mathcal{H}^k$-measurable, $\theta:W \rightarrow \R$ is a positive $\mathcal{H}^k$-integrable function on $W, \|\overrightarrow{T}(x)\|=1$
and $\overrightarrow{T}(x)$ orients the approximate tangent space $T_xW$ for $\mathcal{H}^k$-almost every $x \in W$.
\end{theorem}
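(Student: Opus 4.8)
The plan is to reduce the statement to two essentially independent assertions: that the mass measure $\|T\|$ is of the form $\theta\,\mathcal{H}^k$ carried by a countably $k$-rectifiable set $W$, and that the Radon--Nikodym orientation $\overrightarrow{T}$ is, for $\mathcal{H}^k$-almost every $x$, a simple unit $k$-vector spanning the approximate tangent plane $T_xW$. The first assertion is where the hypothesis that $T$ is real rectifiable is really used; the second is the delicate part.

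First I would extract the polar decomposition. Since $T\in RR_k(\R^n)\subset\mathcal{E}'_k(\R^n)$ has finite mass, the Riesz representation theorem furnishes a Radon measure $\|T\|$ and a $\|T\|$-measurable field $\overrightarrow{T}$ with $\|\overrightarrow{T}(x)\|=1$ for $\|T\|$-almost every $x$ and $T(\varphi)=\int\langle\varphi(x),\overrightarrow{T}(x)\rangle\,d\|T\|(x)$ for all $\varphi\in A_c^k(\R^n)$. By definition there are Lipschitz maps $f_j$ and finite real polyhedral $k$-chains $P_j$ with $M\big(T-(f_j)_*P_j\big)\to 0$; put $T_j:=(f_j)_*P_j$. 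Each $\|T_j\|$ is carried by the set $W_j:=f_j(\mathrm{spt}\,P_j)$, a Lipschitz image of a compact set and hence countably $k$-rectifiable of finite $\mathcal{H}^k$ measure, and the area formula gives $\|T_j\|\ll\mathcal{H}^k$. The key elementary observation is that mass convergence forces total variation convergence of the mass measures: subadditivity of mass yields, for every Borel set $A$, the bound $\big|\,\|T\|(A)-\|T_j\|(A)\,\big|\le\|T-T_j\|(A)\le M(T-T_j)$. Taking $W:=\bigcup_j W_j$, which is countably $k$-rectifiable and $\sigma$-finite for $\mathcal{H}^k$, we obtain $\|T\|(\R^n\setminus W)=\lim_j\|T_j\|(\R^n\setminus W)=0$, and $\mathcal{H}^k(A)=0$ forces $\|T\|(A)=\lim_j\|T_j\|(A)=0$, so $\|T\|\ll\mathcal{H}^k$.

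Consequently $\|T\|$ is a finite measure carried by $W$ and absolutely continuous with respect to $\mathcal{H}^k$, which is $\sigma$-finite on $W$. The Radon--Nikodym theorem then produces a nonnegative $\mathcal{H}^k$-integrable density $\theta$ with $\|T\|=\theta\,\mathcal{H}^k$ on $W$; after discarding the $\mathcal{H}^k$-null set $\{\theta=0\}$ we may assume $\theta>0$ on $W$, which already yields the integral formula $T(\varphi)=\int_W\langle\varphi(x),\overrightarrow{T}(x)\rangle\,\theta(x)\,d\mathcal{H}^k$. Since $\theta$ is positive and locally $\mathcal{H}^k$-integrable on the countably $k$-rectifiable set $W$, Theorem \ref{approximate tangent plane} guarantees that the approximate tangent space $T_xW$ exists for $\mathcal{H}^k$-almost every $x\in W$.

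The remaining point, that $\overrightarrow{T}(x)$ orients $T_xW$ almost everywhere, I expect to be the main obstacle, because $d$-closedness is not assumed and the conclusion must be extracted from the approximation alone. I would argue by blow-up: at $\|T\|$-almost every $x$ the Lebesgue differentiation theorem makes $\overrightarrow{T}$ approximately continuous, while the defining property of the approximate tangent plane makes the rescaled measures $\lambda^{-k}(\eta_{x,\lambda})_*\|T\|$ converge weakly to $\theta(x)$ times $\mathcal{H}^k$ on $P:=T_xW$, where $\eta_{x,\lambda}(z)=(z-x)/\lambda$. Combining these, the rescaled currents $\lambda^{-k}(\eta_{x,\lambda})_*T$ converge to the current on $P$ with constant density $\theta(x)$ and constant orientation $\overrightarrow{T}(x)$. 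To identify $\overrightarrow{T}(x)$ with $\pm$ the unit simple $k$-vector of $P$, I would use that each approximant $T_j$ has, by construction, orientation simple and tangent to its carrier $W_j$, and that the vector-valued measures $\overrightarrow{T_j}\|T_j\|$ converge to $\overrightarrow{T}\|T\|$ in total variation, so the comparison localizes near $x$. Passing to the $C^1$ decomposition $W=\bigcup_j S_j$ of Theorem \ref{countably rectifiable}, one reduces to a single $C^1$ graph $T_j$, on which the concentration of the mass of the $T_i$ forces the tangent directions of the carriers $W_i$, hence their simple orientations, to align with $T_xT_j$; since the simple unit $k$-vectors orienting $k$-planes form a closed set, the limit orientation $\overrightarrow{T}(x)$ is itself simple and spans $T_xW$. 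This last alignment step, extracting the tangency of the limiting orientation from total variation convergence of mass together with the tangency of the approximants, is the technical heart of the argument.
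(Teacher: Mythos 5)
Your proposal is sound in substance but follows a genuinely different route from the paper's. The paper runs the six-step scheme of Federer--Fleming \cite[Theorem 8.16]{FF60}: it introduces the class $C$ of currents admitting the stated representation, takes for granted the steps showing that Lipschitz pushforwards of real polyhedral chains lie in $C$, and re-verifies only the one step needing modification for real coefficients, namely that $C$ is closed under series $\sum_i T_i$ with $\sum_i M(T_i)<\infty$; there the orientations are combined pointwise, using the $C^1$ decomposition of Theorem \ref{countably rectifiable} to see that $\overrightarrow{T}_i(x)=\pm\overrightarrow{T}_l(x)$ on overlaps, so that $\sum_i\theta_i(x)\overrightarrow{T}_i(x)$ collapses to $\theta(x)\overrightarrow{T}(x)$ with $\overrightarrow{T}(x)$ orienting $T_xW$. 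You instead argue directly from the defining approximation: the polar (Riesz) decomposition of $T$, the total-variation estimate $\bigl|\,\|T\|(A)-\|T_j\|(A)\bigr|\le M(T-T_j)$ to transfer ``carried by a countably rectifiable set'' and ``absolutely continuous with respect to $\mathcal{H}^k$'' from the approximants $T_j=(f_j)_*P_j$ to $T$, then Radon--Nikodym to produce $\theta$, and finally identification of $\overrightarrow{T}(x)$ as tangent via total-variation convergence of the vector measures $\overrightarrow{T_j}\|T_j\|\to\overrightarrow{T}\|T\|$. This buys a more self-contained, purely measure-theoretic argument that avoids re-running the Federer--Fleming induction and makes explicit exactly what mass-approximation is used for; the paper's version buys brevity by leaning on \cite{FF60} wholesale. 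Note that both proofs still rest on the same external input --- that a Lipschitz pushforward of a real polyhedral chain itself has a tangent representation (you via the area formula, the paper via the remaining FF steps) --- and both use the locality of approximate tangent planes coming from Theorem \ref{countably rectifiable}.

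Three repairs you should make. First, $\{\theta=0\}$ is $\|T\|$-null, not $\mathcal{H}^k$-null as written; discarding it is still legitimate because the representation is insensitive to that set, but the justification is different. Second, the blow-up discussion is dispensable and by itself can never yield tangency: a current of the form $\overrightarrow{e}\,\mathcal{H}^k\lfloor P$ with $\overrightarrow{e}$ a unit simple $k$-vector not tangent to the plane $P$ is a perfectly good blow-up limit, so all the work sits in your alignment step. Third, that alignment step is cleanest without any blow-up or ``mass concentration'': total-variation convergence says the vector densities $\overrightarrow{T_j}\,\theta_j\,\chi_{W_j}$ converge in $L^1(\mathcal{H}^k\lfloor W)$ to $\overrightarrow{T}\theta$, hence $\mathcal{H}^k$-a.e.\ along a subsequence; at a.e.\ point $x$ with $\theta(x)>0$ one then has $x\in W_j$ for all large $j$ in the subsequence, $\overrightarrow{T_j}(x)$ orients $T_xW_j=T_xW$ by locality of approximate tangent planes, and an a.e.\ limit of unit simple vectors orienting the fixed plane $T_xW$ again orients $T_xW$. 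With these adjustments your argument closes completely.
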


\begin{proof}
We follow the same strategy as Federer and Fleming proved \cite[Theorem 8.16]{FF60}. Let $C$ be the class of all $T \in \mathcal{E}'_k(\mathbb{R}^n)$
which has the integral representation as stated. Let $f: \R^k \rightarrow \R^n$ be a function and $U\subset \R^k$ be an open convex set.
The result will be proved in the following six steps:
\begin{enumerate}
\item If $f$ is continuously differentiable, $f$ is injective on the closure of $U$, and $Df(u)$ is injective for $u\in U$, then $f_*(U)\in C$.
\item If $T_i \in C$ for $i\in \N$, $\sum_{i=1}^{\infty}T_i=T \in \mathcal{E}'_k(\R^n)$ and
$\sum_{i=1}^{\infty}\M(T_i) < \infty$, then $T \in C$.
\item If $T_i\in C$ for $i\in \N$ and
$$\lim_{i\to \infty} T_i=T\in \mathcal{E}'_k(\R^n), \lim_{i\to \infty}\M(T_i-T)=0$$
then $T\in C$.
\item If $f$ is continuously differentiable, then $f_*(U)\in C$.
\item If $f$ is Lipschitzian, then $f_*(U)\in C$.
\item Every $k$-dimensional rectifiable current in $\R^n$ belongs to $C$.
\end{enumerate}

Except 2, the other statements are proved similarly to \cite[Theorem 8.16]{FF60}, so we only check the second statement in
the following.

Let $\varphi \in A_c^k(\mathbb{R}^n)$. By the assumption
$$T_i(\varphi) = \int_{W_i} \langle \ \varphi(x) \  , \  \overrightarrow{T}_i(x) \ \rangle \ \theta_i(x) \ d\mathcal{H}^k, \quad \M(T_i)=\int_{W_i}\theta_i(x)d\mathcal{H}^k$$
and
$$T(\varphi)=\sum^{\infty}_{i=1}T_i(\varphi) = \sum_{i=1}^{\infty}\int_{W_i} \
\langle \ \varphi(x) \  , \  \overrightarrow{T}_i(x) \ \rangle \ \theta_i(x) \ d\mathcal{H}^k $$

Let $W=\bigcup^{\infty}_{i=1}W_i$ and extend each $\theta_i$ by zero outside $W_i$.

Since $\varphi$ has compact support, we may assume $\| \varphi \| \leq 1$. Note that $\| \overrightarrow{T}_i(x)\|=1 $ for
$\mathcal{H}^k$-almost every $x \in W_i$.
So for any $n\in \N$,
$$\sum_{i=1}^{n} \ |<\varphi(x),  \overrightarrow{T}_i(x)\theta_i(x)>| \ \leq \ \sum_{i=1}^{\infty} \
\theta_i(x)$$

By the Lebesgue's dominated convergence theorem, we have
$$\int_{W} \sum_{i=1}^{\infty}
\theta_i(x) d\mathcal{H}^k=\sum_{i=1}^{\infty} \int_{W_i} \theta_i(x) d\mathcal{H}^k =\sum_{i=1}^{\infty}\M(T_i) < \infty$$
which implies $\sum_{i=1}^{\infty} \theta_i \in L^{1}(\mathcal{H}^k)$.

By the Lebesgue dominated convergence theorem again,
$$T(\varphi) = \int_{W}<\varphi(x), \nu(x)>d\mathcal{H}^k$$
where $\nu(x) = \sum_{i=1}^{\infty} \ \theta_i(x) \overrightarrow{T}_i(x)\in \Lambda^k(T_x\R^n)$ is convergent.

By the hypothesis, for each $i\in \N$, there is a subset $Y_i \subset W_i$ such that for every $x \in Y_i$,
$\overrightarrow{T}_i(x)$ exists, $\|\overrightarrow{T}_i(x)\|=1$, $\overrightarrow{T}_i(x)$ orients the approximate tangent space of $W_i$ at
$x$, and $\mathcal{H}^k(W_i\backslash Y_i)=0$. Let $Y=\bigcup_{i=1}^{\infty}Y_i$. Since $W$ is countably $k$-rectifiable and $\mathcal{H}^k$-measurable,
by Proposition \ref{countably rectifiable}, we may express $W$ as $W= \coprod_{j=0}^{\infty}S_j$ where $S_j \subseteq N_j$ for some $C^1$-manifold
$N_j$.

Let $Z=S_0 \bigcup (W\backslash Y)$. Then $\mathcal{H}^k(Z)=0$.
Fix $x \in W\backslash Z$. Then $x$ lies in some $N_j$. If $x \in W_i\bigcap W_l$, both $\overrightarrow{T}_i(x)$ and $\overrightarrow{T}_{\ell}(x)$
orient $T_xN_j$ which means
$\overrightarrow{T}_i(x) = \pm \overrightarrow{T}_{\ell}(x)$. Since $\sum_{i=1}^{\infty} \ \theta_i(x) \overrightarrow{T}_i(x) $
converges in $\bigwedge^k(T_xN_j)$,
$$\sum_{i=1}^{\infty} \ \theta_i(x) \overrightarrow{T}_i(x)=\sum^{\infty}_{i=1}\theta_i(x)a_i(x)\overrightarrow{T}_{\ell}(x)$$
where $a_i(x)=1$ or $-1$. Change all $a_i(x)$ to $-a_i(x)$ if necessary, we have
$$\theta:=\sum_{i=1}^{\infty} a_i\theta_i\geq 0$$
and $\sum_{i=1}^{\infty} \ \theta_i(x) \overrightarrow{T}_i(x)$
is of the form $\theta(x)\overrightarrow{T}(x)$
where $\overrightarrow{T}(x)=\pm \overrightarrow{T}_{\ell}(x)$.
Since $\sum^{\infty}_{i=1}\theta_i\in L^1(\mathcal{H}^k)$ and all $\theta_i's$ are nonnegative, $\theta\in L^1(\mathcal{H}^k)$.

For all $\varphi \in A_c^k(\R^n)$,
$$T(\varphi) = \int_W \quad \langle \ \varphi(x), \  \overrightarrow{T}(x) \ \rangle \ \theta(x) \ d\mathcal{H}^k $$
which means that $T \in C$. This completes the proof.
\end{proof}

The converse of the above result is also true.

\begin{theorem}\label{converse representation}
If $T \in \mathcal{E}'_k(\R^n)$ and
$$T(\varphi) = \int_W \ \langle \ \varphi(x), \  \overrightarrow{T}(x)
\ \rangle \ \theta(x) \ d\mathcal{H}^k$$
for all $\varphi \in A_c^k(\R^n)$ where $W$ is a countably
$k$-rectifiable and $\mathcal{H}^k$-measurable set, $\theta$ is a positive $\mathcal{H}^k$-integrable function on $\R^n$, $\| \overrightarrow{T}(x)\|=1$
and $\overrightarrow{T}(x)$ orients the approximate tangent space $T_xW$ for $\mathcal{H}^k$-almost every $x \in W$, then $T \in RR_k(\R^n)$.
\end{theorem}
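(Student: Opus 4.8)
The plan is to use the $C^1$ decomposition of Theorem~\ref{countably rectifiable} to reduce the statement to a purely Euclidean, top-dimensional approximation problem. Since $\theta$ is a positive $\mathcal{H}^k$-integrable function and $\|\overrightarrow{T}(x)\|=1$, the mass of $T$ is $\int_W \theta\, d\mathcal{H}^k<\infty$, and $T$ has compact support by hypothesis. Writing $W=\bigcup_{j=0}^\infty S_j$ as in Theorem~\ref{countably rectifiable}, with $\mathcal{H}^k(S_0)=0$, the $S_j$ pairwise disjoint, and $S_j\subseteq T_j=f_j(U_j)$ for a $C^1$-diffeomorphism $f_j\colon U_j\to T_j$, $U_j\subseteq\R^k$ open, I set $T_j(\varphi):=\int_{S_j}\langle\varphi,\overrightarrow{T}\rangle\,\theta\, d\mathcal{H}^k$. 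Then $T=\sum_{j\ge 1}T_j$ (the null set $S_0$ contributes nothing), and $\sum_{j\ge1}M(T_j)=\sum_{j\ge1}\int_{S_j}\theta\,d\mathcal{H}^k=\int_W\theta\,d\mathcal{H}^k<\infty$. Given $\varepsilon>0$, I first choose $N$ so that $\sum_{j>N}M(T_j)<\varepsilon/2$; it then suffices to approximate each of the finitely many $T_j$ with $1\le j\le N$ in mass by a pushforward of a real polyhedral chain, and to assemble these on a disjoint union of parameter domains.

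Next I would pull each piece back to $\R^k$. Using the area formula for the $C^1$ map $f_j$ together with the fact that $\overrightarrow{T}(f_j(u))=\epsi(u)\,\overrightarrow{f_j}(u)$ for a measurable sign $\epsi(u)\in\{\pm1\}$, where $\overrightarrow{f_j}$ is the unit $k$-vector induced by the parametrization, a change of variables gives $T_j=(f_j)_*R_j$, where $R_j(\psi):=\int_{U_j}\langle\psi(u),e_1\wedge\cdots\wedge e_k\rangle\,\tilde\theta(u)\,d\mathcal{H}^k(u)$ and $\tilde\theta(u):=\epsi(u)\,\theta(f_j(u))\,\mathbf{1}_{E_j}(u)$ with $E_j:=f_j^{-1}(S_j)$. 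The Jacobian factor $|Jf_j|$ produced by the area formula cancels against the one absorbed into $\overrightarrow{f_j}$, so that $R_j$ is simply integration of the signed density $\tilde\theta$ against the standard orientation of $\R^k$. Thus each $T_j$ is, up to the $C^1$ map $f_j$, an ordinary $L^1$-weighted top-dimensional current on an open subset of $\R^k$.

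It remains to approximate these Euclidean densities by polyhedral chains. Because $|Jf_j|$ is continuous and strictly positive and $\int_{E_j}\theta(f_j)\,|Jf_j|\,d\mathcal{H}^k=M(T_j)<\infty$, I choose a compact $U_j'\subset\subset U_j$ with $\int_{E_j\setminus U_j'}\theta(f_j)\,|Jf_j|\,d\mathcal{H}^k$ as small as desired; on $U_j'$ the map $f_j$ is Lipschitz with some constant $L_j$ and $|Jf_j|$ is bounded below, so $\tilde\theta\in L^1(U_j')$. I then approximate $\tilde\theta$ in $L^1(U_j')$ by a simple function $s=\sum_l c_l\,\mathbf{1}_{\sigma_l}$ that is constant on the nonoverlapping simplices $\sigma_l$ of a sufficiently fine subdivision, and set $P_j:=\sum_l c_l[\sigma_l]$, a finite real polyhedral $k$-chain whose support I keep within a small neighborhood of $E_j$. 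Combining the elementary mass bound $M((f_j)_*S)\le(\mathrm{Lip}\,f_j)^k\,M(S)$ for the Lipschitz pushforward, the identity $M(R_j\lfloor U_j'-P_j)=\int_{U_j'}|\tilde\theta-s|\,d\mathcal{H}^k$, and the tail estimate for $M(T_j-T_j\lfloor f_j(U_j'))$, I obtain $M(T_j-(f_j)_*P_j)<\varepsilon/(2N)$. Placing the relevant parameter domains as disjoint open subsets of a single Euclidean space, letting $f$ restrict to $f_j$ on the $j$-th piece (which is Lipschitz since the pieces are separated) and putting $P:=\sum_{j=1}^N P_j$, I conclude $M(T-f_*P)<\varepsilon$ with $f(\mathrm{spt}\,P)$ contained in a fixed compact neighborhood $K$ of $\mathrm{spt}\,T$; hence $T\in RR_{k,K}(\R^n)\subseteq RR_k(\R^n)$.

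The main obstacle, and the place where genuine care is required, is the reduction of the second paragraph: matching the measurable orientation sign $\epsi(u)$ and correctly tracking the Jacobian of $f_j$ so that $T_j$ really equals $(f_j)_*R_j$ with $R_j$ an honest $L^1$-density current on $\R^k$. Closely related is the possible degeneration of $|Jf_j|$ near $\partial U_j$, which forces the passage to compact subdomains $U_j'$ and makes it essential to use the integrability of $\theta$ with respect to the measure $|Jf_j|\,\mathcal{H}^k$ (equivalently $\mathcal{H}^k\lfloor S_j$) rather than $\mathcal{H}^k\lfloor E_j$. Once these points are settled, the remaining ingredients---$L^1$-approximation by step functions supported on simplices, the Lipschitz pushforward mass inequality, and the assembly of finitely many pieces on a disjoint union---are routine.
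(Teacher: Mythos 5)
Your proposal is correct and follows essentially the same route as the paper's proof: decompose $W$ via Theorem~\ref{countably rectifiable}, cut off the tail in mass, pull each remaining piece back through the $C^1$ parametrization $f_j$ with a measurable orientation sign, approximate the resulting $L^1$ density on $\R^k$ by a real simple function supported on finitely many nonoverlapping simplices, and push forward with the bound $M(f_{j*}S)\leq \mathrm{Lip}(f_j)^k M(S)$. The only substantive difference is that you pass to compact subdomains $U_j'\subset\subset U_j$ to guarantee that $f_j$ is Lipschitz and that the pulled-back density lies in $L^1$, a point the paper glosses over by simply asserting that $\widehat{\theta}_i$ is Lebesgue integrable and using $\mathrm{Lip}(f_i)$ on all of $U$; your version is the more careful one at that step.
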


Before we prove Theorem \ref{converse representation}, we need a simple result whose validity is rather clear.

\begin{lemma}\label{simplices}
Let $A$ be a bounded $\mathcal{L}^n$-measurable subset of $\mathbb{R}^n$. For any given $\varepsilon>0$,
there is a finite set of disjoint $n$-simplices which coincide with $A$ except for a set of measure less then
$\varepsilon$.
\end{lemma}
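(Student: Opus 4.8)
The plan is to read ``coincide with $A$ except for a set of measure less than $\varepsilon$'' as the requirement that the union $P$ of the chosen simplices satisfy $\mathcal{L}^n(A \triangle P) < \varepsilon$, and to produce such a $P$ in two moves: first approximate $A$ by a finite union of nonoverlapping cubes, then cut each cube into finitely many $n$-simplices. Since the paper's standing convention is that simplices are only required to be nonoverlapping, it suffices to control everything up to sets of Lebesgue measure zero, so the combinatorics of the cube and simplex boundaries will never affect the estimate.

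First I would invoke outer regularity of Lebesgue measure. As $A$ is bounded, I can choose a bounded open set $U$ with $A \subseteq U$ and $\mathcal{L}^n(U \setminus A) < \varepsilon/2$; boundedness guarantees $\mathcal{L}^n(U) < \infty$, which is exactly what lets the next step terminate. I then write $U$ as a countable union $\bigcup_{i=1}^{\infty} Q_i$ of nonoverlapping closed dyadic cubes, using the standard dyadic decomposition of an open set. Because $\sum_{i} \mathcal{L}^n(Q_i) = \mathcal{L}^n(U) < \infty$, the tail of the series is small, so I can fix $N$ with $\mathcal{L}^n\bigl(U \setminus \bigcup_{i=1}^{N} Q_i\bigr) < \varepsilon/2$ and set $P := \bigcup_{i=1}^{N} Q_i \subseteq U$.

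The symmetric difference then splits cleanly. Since $A \subseteq U$ we have $A \setminus P \subseteq U \setminus P$, hence $\mathcal{L}^n(A \setminus P) < \varepsilon/2$; since $P \subseteq U$ we have $P \setminus A \subseteq U \setminus A$, hence $\mathcal{L}^n(P \setminus A) < \varepsilon/2$; adding gives $\mathcal{L}^n(A \triangle P) < \varepsilon$. It remains to exhibit $P$ as a finite union of nonoverlapping simplices. Each closed cube $Q_i$ admits the standard triangulation into $n!$ nonoverlapping $n$-simplices, obtained by ordering the coordinates of the unit cube according to the $n!$ permutations, and since the $Q_i$ are themselves nonoverlapping, the union over $i = 1, \dots, N$ of all of these simplices is a finite nonoverlapping family whose union is exactly $P$. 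This finishes the argument.

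As the authors remark, the statement is essentially clear and there is no serious obstacle; the only points that deserve a word are the passage from countably many cubes to finitely many, which is precisely where boundedness of $A$ and hence finiteness of $\mathcal{L}^n(U)$ is used, and the reading of ``disjoint'' as ``nonoverlapping'', since genuinely disjoint closed simplices can never fill up a cube. I would make both of these remarks explicit rather than grind through the elementary measure estimates.
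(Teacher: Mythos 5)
Your proposal is correct, but there is nothing in the paper to compare it against: the authors state Lemma \ref{simplices} without proof, introducing it only as ``a simple result whose validity is rather clear,'' so your argument fills in a gap rather than paralleling an existing one. Your chain of reasoning is the standard and right one: outer regularity (intersected with a large ball to keep $U$ bounded, which you correctly flag as the point where boundedness of $A$ enters), the dyadic decomposition of $U$ into nonoverlapping closed cubes, truncation of the series $\sum_i \mathcal{L}^n(Q_i)=\mathcal{L}^n(U)<\infty$ to get a finite subfamily with small tail, the two-sided symmetric-difference estimate $\mathcal{L}^n(A\setminus P)\le\mathcal{L}^n(U\setminus P)$ and $\mathcal{L}^n(P\setminus A)\le\mathcal{L}^n(U\setminus A)$, and finally the Kuhn--Freudenthal triangulation of each cube into $n!$ nonoverlapping simplices. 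Your reading of the conclusion as $\mathcal{L}^n(A\,\triangle\,P)<\varepsilon$ is also the right one: it is exactly how the lemma is invoked in the proof of Theorem \ref{converse representation}, where the sets $E^i_j$ are replaced by finite unions of simplices with small symmetric difference. Your remark on ``disjoint'' versus ``nonoverlapping'' is apt and consistent with the paper's stated convention; one could even satisfy literal disjointness by taking the open simplices of the triangulation, since discarding the boundaries only changes $P$ by a null set. The only cosmetic omission is that you do not spell out why the bounded open set $U$ with $\mathcal{L}^n(U\setminus A)<\varepsilon/2$ exists (outer regularity plus intersecting with a ball containing $\overline{A}$), but that is a one-line standard fact and does not constitute a gap.
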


Recall that if $U$ is an open subset of $\R^k$ and $f:U\rightarrow \R^n$ is a $C^1$ map, then
$d^Uf_x:T_xU \rightarrow \R^n$ is defined by
$$d^Uf_x(\tau):=\sum^n_{j=1}<\tau, \nabla^Uf_j(x)>e_j$$
where $f=(f_1, f_2, ..., f_n)$ and $\nabla^Uf_j$ is the gradient of $f_j$ where $j=1, 2, ..., n$ and $\{e_1, ..., e_n\}$ is the standard
basis of $\R^n$.

Now we can prove Theorem \ref{converse representation}.

\begin{proof}
Since $T$ has compact support, we may assume that $W$ is bounded.  By Proposition \ref{countably rectifiable}, we may write
$W =\coprod_{j=0}^{\infty}S_j$ where all $S_j\subseteq N_j$ have properties as stated in Proposition \ref{countably rectifiable}.
We have
$$\M(T) = \int_{W} \theta(x) d\mathcal{H}^k(x) = \sum_{i=1}^{\infty} \int_{S_i} \theta(x) d\mathcal{H}^k(x) = \sum_{i=1}^{\infty}
\M(T\lfloor S_i) < \infty.$$
Given $\varepsilon > 0$. Choose $m \in \mathbb{N}$ such that
$$\sum_{i=m+1}^{\infty} \M(T\lfloor S_i) <\varepsilon.$$
Fix  $i$ with $1 \leq i \leq m$. Suppose that the $C^1$-manifold $N_i$ is parametrized by the $C^1$-diffeomorphism $f_i : U \rightarrow N_i$
for some open subset $U\subset \R^k$, and let $U$ be oriented by the natural orientation inherited from $\R^k$.
For $x \in S_i$, there is $y \in U$ such that $f_i(y) = x$. Define
\begin{displaymath}
\widetilde{\theta}_i(x) = \left\{ \begin{array}{ll}
\theta(x), & \textrm{if } d^Uf_{i, y*}(e_1\wedge \cdots \wedge e_k) \mbox{ and } \overrightarrow{T}(x) \mbox{ determine the same orientation.}\\
-\theta(x), & \textrm{otherwise.}\\
\end{array} \right.
\end{displaymath}

Let
$$\widetilde{T}_i(x) = \frac{d^Uf_{i, y*}(e_1\wedge \cdots \wedge e_k)}{\|d^Uf_{i, y*}(e_1\wedge \cdots \wedge e_k)\|}$$

Then
$$(T\lfloor S_i)(\varphi) = \int_{S_i} \langle \varphi(x) , \widetilde{T}_i(x) \rangle \widetilde{\theta}_i(x) d\mathcal{H}^k(x)$$

Let $\widehat{\theta}_i = \widetilde{\theta}_i \circ f_i$. Then $\widehat{\theta}_i$ is Lebesgue integrable. By the change of variable formula, we have
$T\lfloor S_i = f_{i\ast}(U\wedge\widehat{\theta}_i)$.
Given $\lambda_i > 0$.
Choose a simple function $\sum_{j=1}^Na^i_j\chi_{E^i_j}$ that is close to $\widehat{\theta}_i$ in $L^1$-norm
where $a^i_j \in \mathbb{R}$ and all $E^i_j \subset \R^n$ are Lebesgue measurable such that
$$\M(U\wedge \widehat{\theta}_i - U\wedge \sum_{j=1}^Na^i_j\chi_{E^i_j}) \leq ||\widehat{\theta}_i - \sum_{j=1}^Na^i_j\chi_{E^i_j}||_{L^1(U)} <\lambda_i$$

For each $j \in \{1, ... , N\}$, by Lemma \ref{simplices}, we can find finitely many disjoint polyhedrals $\Delta^i_{j, l}$ in $\R^n$ for $l = 1, ... ,q_j$  such that
$$\mathcal{H}^k((E^i_j\backslash \bigsqcup_{l=1}^{q_j}\Delta^i_{j, l}) \bigcup (\bigsqcup_{l=1}^{q_j}\Delta^i_{j, l}\backslash E^i_j)) < \frac{\lambda_i}{N|a^i_j|}$$

Let $P_i =\sum^N_{j=1}\sum_{l=1}^{q_j}a^i_j\Delta^i_{j, l}$.
Then
$$\M(U\wedge \sum^N_{j=1}a^i_j\chi_{E^i_j} - P_i) \leq \sum^N_{j=1}\int_{(E^i_j\backslash
\bigsqcup_{l=1}^{q_j}\Delta^j_l) \bigcup ( \bigsqcup_{l=1}^{q_j}\Delta^j_l\backslash E^i_j)} |a^i_j| d\mathcal{H}^k(x) <\lambda_i$$
This implies that
$$\M(T\lfloor S_i - f_{i_{\ast}}P_i) \leq  Lip(f_i)^k[ \M(U\wedge \hat{\theta}_i - U\wedge
\sum_{j=1}^Na^i_j\chi_{E^i_j}) + \M(U\wedge \sum^N_{j=1}a^i_j\chi_{E^i_j} - P_i) ] < 2 Lip(f_i)^k \lambda_i.$$
Now take $C = m(\max_{i=1,..., m} \{Lip(f_i)^k\})$ and $\lambda_i = \frac{\varepsilon}{2C}$. We
have
$$\M(T - \sum_{i=1}^mf_{i_{\ast}}P_i) \leq \M(\sum_{i=m+1}^{\infty}T\lfloor S_i) + \sum_{i=1}^{m}\M(T\lfloor S_i- f_{i_{\ast}}P_i) < 2\varepsilon$$
This completes the proof.
\end{proof}

\begin{definition}
\begin{enumerate}
\item A triple $(W, \theta, \overrightarrow{T})$ is called an oriented real $k$-rectifold if
$W$ is a countably $k$-rectifiable and $\mathcal{H}^k$-measurable set, $\theta$ is a positive
locally $\mathcal{H}^k$-integrable function on $W$, $\overrightarrow{T}(x)$ orients the approximate
tangent space $T_xW$ and $\| \overrightarrow{T}(x)\|=1$  for $\mathcal{H}^k$-almost every $x \in W$.

\item
The real rectifiable current  associated to an oriented real $k$-rectifold $(W, \theta, \overrightarrow{T})$
is the current $T\in RR_k^{loc}(\R^n)$ defined by
$$T(\varphi) = \int_W \quad \langle \ \varphi(x), \  \overrightarrow{T}(x) \ \rangle \ \theta(x) \ d\mathcal{H}^k$$
for $\varphi \in A_c^k(\R^n)$.
\end{enumerate}
\end{definition}

\begin{definition}
Let $U\subset \R^n$ be an open set. We say that a subset $A\subset U$ is of $\mathcal{H}^k$-locally finite if for
any $u\in U$, there is $r>0$ such that the Hausdorff measure $\mathcal{H}^k(A\cap B_r(u))<\infty$ where $B_r(u)$ is the open ball centered at $u$ with radius $r$.
\end{definition}

We denote by $\Omega(m)$ the volume of the $m$-dimensional unit closed ball.
\begin{definition}
Let $\mu$ be a measure on $\R^n$. The $m$-dimensional upper density of $\mu$ at $p$ is
$$\Theta^{\ast m} (\mu, p) := \limsup_{r\rightarrow 0} \frac{\mu[\overline{B_r(p)}]}{\Omega(m)r^{m}}$$
and the $m$-dimensional lower density of $\mu$ at $p$ is
$$\Theta_{\ast}^{m} (\mu, p) := \liminf_{r\rightarrow 0} \frac{\mu[\overline{B_r(p)}]}{\Omega(m)r^{m}}$$
If $\Theta^{\ast m} (\mu, p) = \Theta_{\ast}^{m} (\mu, p)$, then we call their common value the $m$-dimensional density of $\mu$ at $p$ and denote it by $\Theta^{m} (\mu, p)$.
\end{definition}

For a current $T$, we denote by $\|T\|$ the total variation of $T$.

\begin{proposition}\label{set with positive density}
Suppose that $T\in RR_k^{loc}(\R^n)$ is the real rectifiable $k$-current associated to an oriented real $k$-rectifold $(W, \theta, \overrightarrow{T})$. Let
$N = \{ x \in \R^n : \Theta^k( \| T \| , x ) > 0\}$ .
Then
$$T(\varphi) = \int_N \quad \langle \ \varphi(x), \  \overrightarrow{T}(x) \ \rangle \ \Theta^k( \| T \| , x ) \ d\mathcal{H}^k$$
for all $\varphi \in A_c^k(\R^n)$, and $W$ is $\mathcal{H}^k$-locally finite if and only if $N$ is $\mathcal{H}^k$-locally finite.
\end{proposition}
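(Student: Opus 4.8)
The plan is to reduce both assertions to a single fact: the sets $W$ and $N$ coincide up to an $\mathcal{H}^k$-null set. The starting observation is that, since $\|\overrightarrow{T}(x)\|=1$ on $W$, the mass measure of $T$ is $\|T\| = \theta\,\mathcal{H}^k\lfloor W$, i.e. $\|T\|(A) = \int_{A\cap W}\theta\,d\mathcal{H}^k$ for Borel $A$. Everything then amounts to computing the $k$-density $\Theta^k(\|T\|,x)$ of this measure, for which I would use the density normalized by $\omega_k r^k$, where $\omega_k$ denotes the volume of the unit $k$-ball.

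First I would show that $\Theta^k(\|T\|,x) = \theta(x)$ for $\mathcal{H}^k$-almost every $x\in W$. By the definition of the oriented real $k$-rectifold (and, if desired, Theorem \ref{approximate tangent plane}), the approximate tangent space $T_xW$ exists with respect to $\theta$ for $\mathcal{H}^k$-a.e. $x\in W$, so the defining relation
$$\lim_{\lambda\to 0}\lambda^{-k}\int_W f(\lambda^{-1}(z-x))\,\theta(z)\,d\mathcal{H}^k(z) \;=\; \theta(x)\int_{T_xW} f(y)\,d\mathcal{H}^k(y)$$
holds at such $x$ for every $f\in A^0_c(\R^n)$. Taking continuous compactly supported $f$ squeezing the indicator $\chi_{B_1(0)}$ from above and below, and using that $\mathcal{H}^k(T_xW\cap\partial B_1(0))=0$ so that the squeeze is exact, the left-hand side becomes $\lim_{\lambda\to0}\lambda^{-k}\|T\|(B_\lambda(x))$ and the right-hand side becomes $\theta(x)\,\mathcal{H}^k(T_xW\cap B_1(0)) = \omega_k\,\theta(x)$. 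Dividing by $\omega_k$ gives $\Theta^k(\|T\|,x)=\theta(x)$. Since $\theta>0$ on $W$, every such $x$ lies in $N$, so $\mathcal{H}^k(W\setminus N)=0$.

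Next — and this is the step I expect to be the main obstacle — I would show $\Theta^k(\|T\|,x)=0$ for $\mathcal{H}^k$-almost every $x\in\R^n\setminus W$. This is the standard fact that the density of a measure concentrated on $W$ vanishes off $W$, and the mechanism is a Vitali-type comparison estimate (see \cite{L83}): if the upper density $\Theta^{*k}(\mu,\cdot)\ge t$ on a set $S$, then $\mu(S)\ge c\,t\,\mathcal{H}^k(S)$ for a dimensional constant $c$. Applying this to $\mu=\|T\|$ and $S=A_m:=\{x\in\R^n\setminus W:\Theta^{*k}(\|T\|,x)>1/m\}$, and observing that $\|T\|(A_m)=\int_{A_m\cap W}\theta\,d\mathcal{H}^k=0$ because $A_m\cap W=\emptyset$, forces $\mathcal{H}^k(A_m)=0$. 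Letting $m\to\infty$ yields $\mathcal{H}^k(\{x\notin W:\Theta^{*k}(\|T\|,x)>0\})=0$; since $\Theta^k(\|T\|,x)\le\Theta^{*k}(\|T\|,x)$, the set $N\setminus W$ is contained in this null set, so $\mathcal{H}^k(N\setminus W)=0$. The only delicate points here are the Borel measurability of the upper density and the precise hypotheses of the comparison estimate, both of which are available from the cited references.

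Finally I would assemble the two assertions from $\mathcal{H}^k((N\setminus W)\cup(W\setminus N))=0$. For the integral representation, the integral over $N$ may be replaced by the integral over $W$ up to this null set, and on $W$ we have $\Theta^k(\|T\|,x)=\theta(x)$ with $\overrightarrow{T}(x)$ already defined; hence $\int_N\langle\varphi,\overrightarrow{T}\rangle\,\Theta^k(\|T\|,\cdot)\,d\mathcal{H}^k = \int_W\langle\varphi,\overrightarrow{T}\rangle\,\theta\,d\mathcal{H}^k = T(\varphi)$ for all $\varphi\in A_c^k(\R^n)$. For the equivalence of local finiteness, every ball satisfies $\mathcal{H}^k(N\cap B_r(u))=\mathcal{H}^k(W\cap B_r(u))$, so finiteness of one for small $r$ is equivalent to finiteness of the other, proving both directions at once.
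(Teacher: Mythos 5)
Your proof is correct, and its overall skeleton matches the paper's: everything is reduced to $\mathcal{H}^k\bigl((W\setminus N)\cup(N\setminus W)\bigr)=0$ via two density computations, after which the integral identity and the local-finiteness equivalence follow at once. The first computation ($\Theta^k(\|T\|,x)=\theta(x)>0$ for $\mathcal{H}^k$-a.e.\ $x\in W$) is the same in substance; the paper simply cites \cite[pg 63]{L83} for what you derive by the squeeze argument from the definition of approximate tangent spaces. The genuine divergence is in the second computation, that $\Theta^k(\|T\|,\cdot)=0$ holds $\mathcal{H}^k$-a.e.\ off $W$. The paper obtains this from \cite[Theorem 3.5]{L83}, whose conclusion is only ``$\mu$-almost everywhere off $W$''; since that statement is vacuous for $\mu=\|T\|$ (which vanishes off $W$), the paper must introduce the auxiliary Borel-regular measure $\bar\mu=\mathcal{H}^k\lfloor\bar\theta$ with $\bar\theta\equiv 1$ off $W$, and, because Theorem 3.5 also requires finite mass, must localize to balls $B_1(x_j)$ centered along a countable sequence in $W$ and patch the conclusions together. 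You instead go one level down and apply the Vitali-type upper-density comparison estimate from \cite{L83} (the result from which Theorem 3.5 is itself derived) directly to $\|T\|$ on the sets $A_m=\{x\notin W:\Theta^{*k}(\|T\|,x)>1/m\}$: since $W$ is $\mathcal{H}^k$-measurable, $\|T\|(\R^n\setminus W)=0$, hence $\|T\|(A_m)=0$ and the estimate forces $\mathcal{H}^k(A_m)=0$; note the estimate needs no measurability of $A_m$, since both sides are outer measures, so the ``delicate point'' you flag is harmless. Your route buys a cleaner argument---no auxiliary measure, no finiteness localization or covering of $W$---at the cost of invoking the covering-lemma-level estimate rather than a packaged theorem; the paper's route is pure citation, but needs the $\bar\theta$ trick precisely to convert a $\|T\|$-a.e.\ conclusion into an $\mathcal{H}^k$-a.e.\ one.
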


\begin{proof}
Let $\mu := \mathcal{H}^k\lfloor \theta=||T||$.
The existence of the approximate tangent plane (Theorem \ref{approximate tangent plane}) of $W$ implies that (see \cite[pg 63]{L83})
$$0<\theta(x) = \lim_{r\rightarrow 0^{+}} \frac{\int_{W\bigcap B_r(x)} \theta(y) d\mathcal{H}^k(y)}{\Omega(k) r^k} =
\Theta^k ( \mu , x)$$
for $\mu$-almost every $x\in W$. This implies that $\mathcal{H}^k(W-N)=0$.

Let
\begin{displaymath}
\Bar{\theta}(x) = \left\{ \begin{array}{ll}
\theta(x), & \textrm{if $x \in W$}\\
1, & \textrm{otherwise.}\\
\end{array} \right.
\end{displaymath}
and $\Bar{\mu} = \mathcal{H}^k \lfloor \Bar{\theta}$.

Since $\theta$ is locally $\mathcal{H}^k$-integrable, for each $x \in W$, we have
$$
\M(T\lfloor B_1(x)) = \int_{W\cap B_1(x)} \theta d\mathcal{H}^k < \infty.
$$
Since $\Bar{\mu}$ is Borel regular, $W\subset \R^n$ is $\Bar{\mu}$-measurable and
$$\Bar{\mu}(W\cap B_1(x))=\int_{W\cap B_1(x)}\Bar{\theta}(x)d\mathcal{H}^k(x)=\int_{W\cap B_1(x)}\theta(x)d\mathcal{H}^k(x)=\M(T\lfloor B_1(x))<\infty,$$
by \cite[Theorem 3.5]{L83},
$$\Theta^{*k}(\Bar{\mu}, W, y) = \Theta^{*k}(\Bar{\mu}, W\cap B_1(x), y) = 0$$
for $\Bar{\mu}$-almost every $y\in B_1(x)-W$.
Find a sequence $\{x_j\}_{j=1}^{\infty}$ in $W$ such that $W \subset V = \bigcup_{j=1}^{\infty}B_1(x_j)$. Then
$$\Theta^{*k}(\Bar{\mu}, W, y) = 0$$
for $\Bar{\mu}$-almost every $y\in V-W$.
Clearly, $\Theta^{*k}(\Bar{\mu}, W, y) = 0$ for all $y\in \mathbb{R}^n-V$.
Therefore
$\Theta^k(||T||, x)= 0$  for  $\mathcal{H}^k$-almost every $x\in \R^n- W$.
This implies $\mathcal{H}^k(N-W)=0$.

From the equality
$$\mathcal{H}^k [(W \setminus N) \bigcup (N \setminus W)] = 0$$
we may rewrite
$$T(\varphi)=\int_W<\varphi(x), \overrightarrow{T}(x)>d\mathcal{H}^k\lfloor \theta= \int_N <\varphi(x), \  \overrightarrow{T}(x)>\Theta^k( \| T \| , x ) \ d\mathcal{H}^k $$
for all $\varphi \in A_c^k(\mathbb{R}^n)$ and
we see that
$W$ is $\mathcal{H}^k$-locally finite if and only if $N$ is $\mathcal{H}^k$-locally finite.
\end{proof}

\begin{theorem}\label{normal to real rectifiable}
If  $T \in N^{loc}_k(\R^n)$ has $\mathcal{H}^k$-locally finite support, then $T$  is real rectifiable.
\end{theorem}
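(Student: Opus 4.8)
The plan is to manufacture an oriented real $k$-rectifold representing $T$ and then invoke Theorem \ref{converse representation}. Since both the hypothesis and the conclusion are local, I would fix a point and arrange that $T$ has compact support, finite mass, and finite boundary mass. The first step is to record the general representation of a finite-mass current: because $T \in N^{loc}_k(\R^n)$ has locally finite mass, there is a Radon measure $\mu = \|T\|$ and a $\mu$-measurable unit $k$-vector field $\overrightarrow{T}$ with $T(\varphi) = \int \langle \varphi(x), \overrightarrow{T}(x)\rangle\, d\mu$ (\cite{FF60}). Everything then reduces to showing that $\mu$ is carried by a countably $k$-rectifiable set on which $\overrightarrow{T}$ orients the approximate tangent plane, since Theorem \ref{converse representation} finishes the argument from there.

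Second, I would exploit the two standing hypotheses to pin down the density behaviour of $\mu$. Let $N = \{x : \Theta^{k}(\|T\|, x) > 0\}$. The assumption that $\mathrm{spt}(T)$ is $\mathcal{H}^k$-locally finite makes $\mathcal{H}^k$ a Radon measure there and gives, through the upper-density comparison \cite[Theorem 3.2]{L83}, a finite upper bound $\Theta^{*k}(\mu,x)<\infty$ together with the estimate $\mu\lfloor N \le 2^k\, \Theta^{*k}(\mu,\cdot)\, \mathcal{H}^k\lfloor N$, so that $\mu\lfloor N \ll \mathcal{H}^k$ locally. The normality of $T$ is what controls the lower density: finiteness of the boundary mass $M(\partial T)$, via the isoperimetric inequality for normal currents, forces $\Theta^k_*(\|T\|,x)>0$ for $\mu$-almost every $x$, whence $\mu(\R^n\setminus N)=0$. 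Combining the two comparisons shows that $\mu$ and $\mathcal{H}^k\lfloor N$ are mutually absolutely continuous, that $\Theta^k(\mu,x)$ exists and is finite and positive for $\mu$-a.e. $x$, and that $N$ has locally finite $\mathcal{H}^k$-measure.

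Third — and this is where the genuine work lies — I would prove that $N$ is countably $k$-rectifiable and that $\overrightarrow{T}$ orients its tangent planes. The strategy is to split $N$ into its rectifiable and purely unrectifiable parts and to show a normal current cannot place mass on the purely unrectifiable part: by the Besicovitch–Federer projection theorem almost every orthogonal projection maps that part to an $\mathcal{H}^k$-null set, while slicing $T$ by these projections (legitimate because $M(\partial T)<\infty$) shows the mass carried there must vanish, an incarnation of Federer's structure theorem (\cite{F69}). Equivalently, one checks directly that the approximate tangent plane of $\mu$ exists $\mu$-a.e. and applies Theorem \ref{approximate tangent plane}. With rectifiability established, Theorem \ref{countably rectifiable} supplies the $C^1$ pieces $T_j$, and at $\mu$-a.e. point the blow-up of $T$ is a multiplicity-weighted plane, which forces $\overrightarrow{T}(x)$ to be the simple unit $k$-vector orienting $T_xN$. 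Thus $(N, \Theta^k(\|T\|,\cdot), \overrightarrow{T})$ is an oriented real $k$-rectifold whose associated current is $T$, and Theorem \ref{converse representation}, applied on each relatively compact piece, yields that $T$ is real rectifiable.

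I expect the main obstacle to be this third step: excluding a diffuse, purely unrectifiable part of $\mu$ while simultaneously showing that $\overrightarrow{T}$ aligns with the approximate tangent plane. This is precisely the point at which normality — finiteness of $M(\partial T)$ rather than mere local finiteness of mass — is indispensable, since slicing and the projection theorem are exactly the tools that convert finite boundary mass into rectifiability.
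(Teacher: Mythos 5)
There is a genuine gap, and it sits exactly where you assign the roles of the two hypotheses. Your second step claims that normality, ``via the isoperimetric inequality for normal currents, forces $\Theta^k_*(\|T\|,x)>0$ for $\mu$-almost every $x$.'' This is false for real normal currents: take $k<n$, a constant simple unit $k$-vector $\xi=e_1\wedge\cdots\wedge e_k$, a nonnegative $f\in A_c^0(\R^n)$, and define $T(\varphi)=\int_{\R^n} f(x)\,\langle \varphi(x),\xi\rangle\, d\mathcal{L}^n(x)$. Then $T$ is normal (its boundary is again given by integration against first derivatives of $f$, hence has finite mass), yet $\|T\|(B_r(x))=O(r^n)$, so $\Theta^{k}(\|T\|,x)=0$ at \emph{every} point. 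Such diffuse normal currents are precisely the reason the theorem carries a support hypothesis at all; they do not contradict the theorem only because their support fails to be $\mathcal{H}^k$-locally finite. So positivity of the density cannot come from normality; it must come from the $\mathcal{H}^k$-locally finite support assumption, which you instead spend only on the upper bound. This is the paper's (short) argument: with $S=\mbox{spt}(T)$, local finiteness of $\mathcal{H}^k\lfloor S$ gives the lower density bound $\Theta^k(\mathcal{H}^k,S,x)\geq 2^{-k}$ for $\mathcal{H}^k$-a.e.\ $x\in S$ (citing \cite[p.~494(4)]{FF60}), and differentiating $\|T\|$ against $\mathcal{H}^k\lfloor S$ (\cite[p.~63]{L83}) then yields $\Theta^{k}(\|T\|,x)>0$ for $\|T\|$-a.e.\ $x$. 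Since your later steps (mutual absolute continuity of $\mu$ and $\mathcal{H}^k\lfloor N$, the rectifiable/unrectifiable splitting, the blow-up analysis) all feed off this density statement, the error propagates and the proposal as written does not close. A smaller caveat in the same step: \cite[Theorem~3.2]{L83} shows the set where $\Theta^{*k}(\mu,\cdot)=\infty$ is $\mathcal{H}^k$-null, but a priori an $\mathcal{H}^k$-null set can still carry $\mu$-mass, so even your upper-density claim ``$\Theta^{*k}(\mu,x)<\infty$ $\mu$-a.e.'' needs an argument.

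A structural comparison is also worth making. Once positive density is in hand, the paper does not touch projections, slicing, or blow-ups: it simply invokes the Rectifiability Theorem \cite[Theorem~32.1]{L83}, whose hypotheses are exactly ``$T$ normal and $\Theta^{*k}(\|T\|,x)>0$ for $\|T\|$-a.e.\ $x$'' and whose conclusion is rectifiability with positive real multiplicity. Your third step is essentially a sketch of the \emph{proof} of that theorem (Besicovitch--Federer projections, slicing against finite boundary mass, tangent-plane alignment), so even after repairing the density step you would be re-proving the black box the paper cites. That route is legitimate and, combined with Theorem~\ref{converse representation}, would give a self-contained treatment, but the two delicate points you flag as ``the genuine work'' are precisely the content of the cited theorem; the novel content of this particular statement is only the derivation of positive density from the $\mathcal{H}^k$-locally finite support, which is the one step your proposal gets wrong.
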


\begin{proof}
Let $S=\mbox{spt}(T)$. Since $S$ is $\mathcal{H}^k$-locally finite, by \cite[pg 494 (4)]{FF60},
$$ \Theta^k(\mathcal{H}^k, S, x) = \lim_{r\rightarrow 0^+} \frac{\mathcal{H}^k(S\bigcap B_r(x))}{\Omega(k)r^k} \geq 2^{-k}$$
for $\mathcal{H}^k$-almost every $x\in S$.
By \cite[Page 63]{L83}
\begin{align*}
\theta(x) &= \lim_{r\rightarrow 0^+} \frac{\| T \|(B_{r}(x))}{\mathcal{H}^k(S\bigcap B_r(x))}= \lim_{r\rightarrow 0^+} \frac{\| T \|(B_{r}(x))}{\Omega(k)r^k} \frac{\Omega(k)r^k}{\mathcal{H}^k(S\bigcap B_r(x))}\\
&\leq 2^k\lim_{r\rightarrow 0^+} \frac{\| T \|(B_{r}(x))}{\Omega(k)r^k}= 2^k\Theta^k(||T|| , x)
\end{align*}
for  $||T||$-almost every $x \in S$.
Thus
$\Theta^{k}(\| T \| , x) > 0$ for  $\| T \|$-almost every $x \in S$. Since $||T||(\R^n-S)=0$,
$\Theta^{k}(||T||, x)>0$ for $||T||$-almost every $x\in \R^n$. By \cite[Theorem 32.1]{L83}, $T$ is real rectifiable.
\end{proof}

\section{A generalization of King's theorem}

From now on, we let $U$ be an open subset of $\C^n$.

\begin{definition}
Let $\omega$ be the standard K\"{a}hler form of $\mathbb{C}^n$, $\omega_k=\frac{\omega^k}{k!}$ and $T \in \mathscr{D}^{\prime}_{k,k}(U)$ be a positive closed current. The Lelong
number $n(T , a)$ of $T$ at a point $a \in U$ is defined to be $\Theta^{2k}(T\wedge \omega_k , a)$.
\end{definition}

\begin{lemma}
If $T \in RR^{loc}_{k,k}(U)$ is positive and closed, then
$$n(T , a) = \Theta^{2k}(||T||, a)$$ for all $a\in U$. In particular, $\Theta^{2k}(||T||, a)$ exists for all $a\in U$.
\end{lemma}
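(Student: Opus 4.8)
The plan is to identify the two measures $\|T\|$ and $T\wedge\omega_k$ on the nose, and then simply read off the equality of their densities. First I would localize: both $n(T,a)=\Theta^{2k}(T\wedge\omega_k,a)$ and $\Theta^{2k}(\|T\|,a)$ are defined by limits of ball-averages, so the statement is purely local at $a$, and I may replace $T$ by a compactly supported current in $RR_{2k}(\R^{2n})$ (recall a bidimension $(k,k)$ current has real dimension $2k$, and $U\subset\C^n=\R^{2n}$) that agrees with $T$ near $a$, on which positivity and closedness still hold. Applying the integral representation (Theorem \ref{representation}) produces a countably $2k$-rectifiable, $\mathcal{H}^{2k}$-measurable set $W$, a positive $\mathcal{H}^{2k}$-integrable density $\theta$, and a unit simple $2k$-vector field $\overrightarrow{T}$ orienting the approximate tangent plane $T_xW$, with $\|T\|=\theta\,\mathcal{H}^{2k}\lfloor W$ (as already used in Proposition \ref{set with positive density}). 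Testing $T$ against $\chi\,\omega_k$ for $\chi\in A^0_c$ gives simultaneously $T\wedge\omega_k=\langle\omega_k,\overrightarrow{T}(x)\rangle\,\theta\,\mathcal{H}^{2k}\lfloor W$. Hence the whole problem reduces to showing $\langle\omega_k,\overrightarrow{T}(x)\rangle=1$ for $\mathcal{H}^{2k}$-almost every $x\in W$.

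The main step, and the part I expect to be the real obstacle, is to convert positivity of the current into a pointwise statement about $\overrightarrow{T}$. Writing $T=\overrightarrow{T}\,\|T\|$, positivity means $\int_W\langle\varphi(x),\overrightarrow{T}(x)\rangle\,\theta\,d\mathcal{H}^{2k}\ge 0$ for every positive test form $\varphi$; since $\theta>0$, this forces $\overrightarrow{T}(x)$ to be a positive $2k$-vector for $\mathcal{H}^{2k}$-almost every $x$. The structure theorem for positive simple vectors then identifies a unit simple positive $2k$-vector with the canonical (complex) orientation of a complex $k$-plane, so that $T_xW$ is a complex subspace and $\overrightarrow{T}(x)$ its complex orientation for a.e.\ $x$. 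The equality case of Wirtinger's inequality $\langle\omega_k,\xi\rangle\le 1$ for unit simple $2k$-vectors $\xi$ now yields $\langle\omega_k,\overrightarrow{T}(x)\rangle=1$ exactly on these complex tangent planes. Combined with the reduction above, this gives $T\wedge\omega_k=\|T\|$ as measures.

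Finally, from $T\wedge\omega_k=\|T\|$ the ball-averages $(T\wedge\omega_k)(B_r(a))/(\Omega(2k)r^{2k})$ and $\|T\|(B_r(a))/(\Omega(2k)r^{2k})$ agree for every $a$ and every $r$, so $n(T,a)=\Theta^{2k}(\|T\|,a)$ whenever either density exists. This is where closedness enters: for a positive closed current the trace measure $T\wedge\omega_k$ obeys Lelong's monotonicity formula, so these averages are nondecreasing in $r$ and the limit $n(T,a)=\Theta^{2k}(T\wedge\omega_k,a)$ exists at every $a\in U$. Transporting this existence across the identity of measures yields the ``in particular'' clause that $\Theta^{2k}(\|T\|,a)$ exists for all $a$, completing the argument.
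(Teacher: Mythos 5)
Your proof is correct and follows essentially the same route as the paper's: both reduce the claim, via the integral representation theorem, to the pointwise identity $\langle \omega_k, \overrightarrow{T}(x)\rangle = 1$ for $\mathcal{H}^{2k}$-a.e.\ $x$ (positivity forces the approximate tangent planes to be canonically oriented complex $k$-planes, where Wirtinger's inequality is an equality), concluding $T\wedge\omega_k=\|T\|$ as measures and then importing the existence of Lelong numbers for closed positive currents. The only difference is expository: you spell out the localization and the positivity-implies-complex-tangent-planes step, which the paper leaves implicit in its citation of Wirtinger's inequality and treats separately in the subsequent lemma adapted from Harvey--Shiffman.
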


\begin{proof}
By Wirtinger's inequality (\cite[Theorem 4.1]{F65}) and the integral representation of $T$ (Theorem \ref{representation}), for any Borel set $B \subset U$,
we have
$$(T\cap B)\wedge \omega_k=\int_B<\omega_k, \overrightarrow{T}>\theta d\mathcal{H}^{2k}=\int_B\theta d\mathcal{H}^{2k}=||T||(B)$$
 In particular, $\|T\|(B_r(a)) = (T\cap B_r(a))\wedge\omega_k=(T\wedge \omega_k)\cap B_r(a)$.
Hence $\|T\| = T\wedge \omega_k$, and therefore $n(T , a) = \Theta^{2k}(||T||, a)$.
Since Lelong number $n(T, a)$ exits for all $a\in U$, so does $\Theta^{2k}(T\wedge \omega_k , a)$.
\end{proof}

Since we have integral representation for locally real rectifiable currents. The following result is a simple modification of \cite[Lemma 1.12]{HS74}.
\begin{lemma}
Suppose that $T \in RR^{loc}_{2k}(U)$ is associated to the oriented real $2k$-rectifold $(W , \theta(x), \overrightarrow{T}(x))$.
Then $T \in RR^{loc}_{k, k}(U)$ if and only if $\overrightarrow{T}(x)$ is complex (i.e. $\overrightarrow{T}(x)$ represents a
complex subspace of $\mathbb{C}^n$) for $\|T\|$-almost every $x\in U$. Furthermore, $T \in RR^{loc}_{k,k}(U)$ is
positive if and only if $\overrightarrow{T}(x)$ is complex and positive for $\|T\|$-almost every $x\in U$.
\end{lemma}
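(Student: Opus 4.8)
The plan is to reduce both equivalences to pointwise statements about the orienting multivector $\overrightarrow{T}(x)$ via the integral representation, and then to settle the resulting linear-algebra questions fiberwise. Throughout I identify $U\subset\C^n$ with an open subset of $\R^{2n}$ carrying its complex structure $J$, so that $T_xU\otimes\C = T^{1,0}_x\oplus T^{0,1}_x$ and the complexified exterior algebra splits by bidimension, $\bigwedge^{2k}(T_xU\otimes\C)=\bigoplus_{a+b=2k}\bigwedge^a T^{1,0}_x\otimes\bigwedge^b T^{0,1}_x$. Write $\overrightarrow{T}(x)_{(a,b)}$ for the components of the real $2k$-vector $\overrightarrow{T}(x)$ in this splitting; since complex conjugation interchanges the $(a,b)$ and $(b,a)$ summands and fixes the $(k,k)$ one, a real multivector may indeed be of pure bidimension $(k,k)$, and for $(a,b)\neq(k,k)$ its components vanish in conjugate pairs. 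A test form of bidegree $(a,b)$ pairs nontrivially only with the bidimension-$(a,b)$ part of a multivector, so Theorem~\ref{representation} gives, for every $\varphi\in A^{2k}_c(U)$ of pure bidegree $(a,b)$,
$$T(\varphi) = \int_W \langle\, \varphi(x),\, \overrightarrow{T}(x)_{(a,b)}\,\rangle\, \theta(x)\, d\mathcal{H}^{2k}.$$

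First I would treat the type condition. By definition $T\in RR^{loc}_{k,k}(U)$ means $T(\varphi)=0$ for every test form of bidegree $(a,b)$ with $a+b=2k$ and $(a,b)\neq(k,k)$. Letting $\varphi$ range over all such forms in the displayed identity and using $\theta>0$ $\mathcal{H}^{2k}$-a.e.\ on $W$, a standard localization (the fundamental lemma of the calculus of variations, applied to the $\mathcal{H}^{2k}\lfloor\theta$-measurable coefficient functions of $\overrightarrow{T}$, tested against bump functions times constant covectors) shows that $T$ is of type $(k,k)$ if and only if $\overrightarrow{T}(x)_{(a,b)}=0$ for $\|T\|$-a.e.\ $x$ and every $(a,b)\neq(k,k)$, i.e.\ iff $\overrightarrow{T}(x)$ has pure bidimension $(k,k)$ almost everywhere. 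There remains the pointwise fact that a \emph{simple} unit real $2k$-vector $\xi$ orienting a real $2k$-plane $V\subset\C^n$ is of pure bidimension $(k,k)$ if and only if $V$ is a complex subspace. The direction ``complex $\Rightarrow$ pure $(k,k)$'' is the easy computation that the canonical orienting vector of a complex $k$-plane is a multiple of $w_1\wedge\bar w_1\wedge\cdots\wedge w_k\wedge\bar w_k\in\bigwedge^k T^{1,0}\otimes\bigwedge^k T^{0,1}$. For the converse, I would argue by the Kähler-angle normal form: there is an orthonormal basis $f_1,g_1,\dots,f_k,g_k$ of $V$ and angles $\theta_i$ with $\omega|_V=\sum_i\cos\theta_i\, f_i^*\wedge g_i^*$, and pure bidimension of $\xi=f_1\wedge g_1\wedge\cdots\wedge f_k\wedge g_k$ forces every $\theta_i\in\{0,\pi\}$, i.e.\ $V=JV$. (For $k=1$ this is transparent: $\xi_{(2,0)}=f_1^{1,0}\wedge g_1^{1,0}=0$ forces $f_1^{1,0},g_1^{1,0}$ to be $\C$-dependent, whence $V=\mathrm{span}(f_1,Jf_1)$.)

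For the positivity statement I assume $T$ is already of type $(k,k)$, so by the above $\overrightarrow{T}(x)$ orients a complex $k$-plane for a.e.\ $x$. By definition $T$ is positive if $T(\varphi)\geq0$ for every strongly positive $(k,k)$ test form $\varphi$. Feeding such $\varphi$ into the representation and localizing again — now with an inequality, so that if $\overrightarrow{T}(x)$ failed to be a positive $(k,k)$-vector on a set of positive $\|T\|$-measure, a strongly positive $\varphi$ concentrated near a density point would give $T(\varphi)<0$ — one finds that $T\geq0$ iff $\langle\varphi(x),\overrightarrow{T}(x)\rangle\geq0$ for a.e.\ $x$ and all strongly positive $\varphi$, i.e.\ iff $\overrightarrow{T}(x)$ is a positive $(k,k)$-vector a.e. For a simple $2k$-vector orienting a complex plane, being a positive $(k,k)$-vector is exactly carrying the canonical complex orientation, equivalently $\langle\omega_k,\overrightarrow{T}(x)\rangle=1$ by the equality case of Wirtinger's inequality. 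Combined with the type equivalence this gives that $T\in RR^{loc}_{k,k}(U)$ is positive iff $\overrightarrow{T}(x)$ is complex and positively oriented for $\|T\|$-a.e.\ $x$.

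The main obstacle is the pointwise linear-algebra step: that a simple unit real $2k$-vector of pure bidimension $(k,k)$ must orient a complex subspace. Simplicity is essential — a general pure $(k,k)$ multivector such as $\partial_{z_1}\wedge\partial_{\bar z_1}+\partial_{z_2}\wedge\partial_{\bar z_2}$ need not be complex — so the argument must exploit decomposability, and I expect the cleanest route is through Wirtinger's inequality and the Kähler-angle normal form rather than brute-force coordinates. This is precisely where having the integral representation of Theorem~\ref{representation} available reduces the whole proof to a fiberwise statement and makes it a routine modification of \cite[Lemma 1.12]{HS74}, which had no such representation at its disposal.
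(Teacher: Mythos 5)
Your proof is correct and follows essentially the route the paper intends: the paper offers no argument of its own, deferring to \cite[Lemma 1.12]{HS74}, and your reduction via the integral representation of Theorem \ref{representation} to the fiberwise fact that a simple unit real $2k$-vector of pure bidimension $(k,k)$ orients a complex plane (settled by the K\"ahler-angle normal form, with the density-point localization handling both the type condition and positivity) is exactly the ``simple modification'' being invoked. The one thin spot, which you correctly flag as the main obstacle, is that for $k>1$ forcing all K\"ahler angles into $\{0,\pi\}$ requires the vanishing of the components of bidimension $(k+|S|,k-|S|)$ for \emph{every} nonempty subset $S$ of factors contributing their $(2,0)$-parts, not just the $(k+1,k-1)$ component (e.g.\ two angles equal to $\pi/2$ kill the latter but not the $(k+2,k-2)$ part); this goes through because the normal form makes the resulting monomials linearly independent, so the sketch completes without difficulty.
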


We recall that a closed subset $A\subset X$ in a complex manifold $X$ is a holomorphic subvariety
if for any point $a\in A$, there is a neighborhood $W\subset X$ of $a$ and some $f_1, ..., f_m\in \mathcal{O}(W)$
such that
$$A\cap W=\{x\in X: f_1(x)=\cdots=f_m(x)=0\}$$

\begin{definition}
A current $T \in \mathscr{D}^{\prime}_{2k}(U)$ is said to be a real holomorphic $k$-chain on
$U$ if $T$ can be written in the form $T = \sum^{\infty}_{j=1} r_j [V_j]$ where $r_j \in \mathbb{R}$ and $V=\bigcup^{\infty}_{j=1} V_j$ is a
purely $k$-dimensional holomorphic subvariety of $U$ with irreducible components $\{V_j\}^{\infty}_{j=1}$. The vector space of
real holomorphic $k$-chains on $U$ is denoted by $\mathscr{RZ}_k(U)$. Also let $\mathscr{RZ}^{+}_k(U)$ denote the
set of positive real holomorphic $k$-chains on $U$, i.e., those real holomorphic $k$-chains with nonnegative coefficients.
\end{definition}

We recall the following semicontinuity theorem of Siu's (see \cite{D, S74}).
\begin{theorem}(Siu's semicontinuity  theorem)\label{Siu}
If $T$ is a closed positive current of bidimension $(k,k)$ on a complex manifold $X$, then the upperlevel sets
$$E_c(T) = \{ x \in X : n(T , x) \ \geq c \}$$
are holomorphic subvarieties of dimension $\leq k$.
\end{theorem}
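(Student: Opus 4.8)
The statement is Siu's celebrated semicontinuity theorem, so my plan is to recall the structure of its proof rather than to reprove it from scratch; the cleanest route, and the one behind the reference \cite{D}, is Demailly's $L^2$/multiplier-ideal argument. The assertion is local on $X$ and invariant under the reductions below, so I would first pass to a coordinate ball $U \subset \C^n$ and reduce to the case of a plurisubharmonic function. For a closed positive $(1,1)$-current one has locally $T = dd^c\varphi$ with $\varphi$ plurisubharmonic, and the Lelong number $n(T,x)$ equals the Lelong number $\nu(\varphi,x)$ of $\varphi$; for general bidimension I would invoke Siu's reduction, comparing $n(T,\cdot)$ with the Lelong numbers of the direct images $\pi_*T$ under generic complex-linear projections $\pi\colon\C^n\to\C^{k+1}$. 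Since $T$ has bidimension $(k,k)$, each $\pi_*T$ is a closed positive $(1,1)$-current on $\C^{k+1}$, hence of the form $dd^c\psi_\pi$, and one recovers $E_c(T)$ from the sets $E_c(\pi_*T)$. Thus it suffices to show that $\{x : \nu(\varphi,x) \ge c\}$ is analytic.

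The heart of the matter is to express the upperlevel sets through multiplier ideal sheaves. For $\lambda > 0$ let $\mathcal{I}(\lambda\varphi)$ be the multiplier ideal sheaf, whose stalk at $x$ consists of the germs $f$ with $|f|^2 e^{-2\lambda\varphi}$ locally integrable near $x$. By Nadel's theorem, proved via Hörmander's $L^2$ estimates for $\bar\partial$, this sheaf is coherent, so its zero set $V(\mathcal{I}(\lambda\varphi))$ is a genuine analytic subvariety. Skoda's integrability estimates then sandwich these zero sets between upperlevel sets: if $\nu(\varphi,x) < 1/\lambda$ then $e^{-2\lambda\varphi}$ is integrable near $x$ and $x \notin V(\mathcal{I}(\lambda\varphi))$, whereas if $\nu(\varphi,x) \ge n/\lambda$ then $e^{-2\lambda\varphi}$ is non-integrable and $x \in V(\mathcal{I}(\lambda\varphi))$. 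Equivalently,
\[
\{\nu(\varphi,\cdot) \ge n/\lambda\} \ \subseteq \ V(\mathcal{I}(\lambda\varphi)) \ \subseteq \ \{\nu(\varphi,\cdot) \ge 1/\lambda\}.
\]
To pin down $E_c$ exactly I would use the refined Skoda comparison of $\mathcal{I}(m\varphi)$ with the powers of the maximal ideal $\mathfrak{m}_x$, which gives that $\tfrac1m$ times the order of vanishing forced by $\mathcal{I}(m\varphi)$ at $x$ tends to $\nu(\varphi,x)$ as $m\to\infty$. This realizes $\{\nu(\varphi,\cdot)\ge c\}$ as an intersection over $m$ of the analytic sets $\{x : \mathcal{I}(m\varphi)_x \subseteq \mathfrak{m}_x^{\lceil mc\rceil - n}\}$. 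Since an arbitrary intersection of analytic subsets is again analytic — locally the sum of the relevant ideal sheaves is generated by finitely many of them, because $\mathcal{O}_{X,x}$ is Noetherian — the set $E_c$ is analytic. Its dimension is $\le k$: by the support theorem the generic Lelong number along each irreducible component $Z$ of $E_c(T)$ contributes a term to $T$, which has bidimension $(k,k)$, forcing $\dim Z \le k$.

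The main obstacle is precisely the analytic input I would quote wholesale, namely the coherence of the multiplier ideal sheaves (Nadel), which rests on the solvability of $\bar\partial$ with $L^2$ bounds (Hörmander–Andreotti–Vesentini), together with Skoda's sharp comparison between Lelong numbers and integrability exponents. Once these are granted, the remaining ingredients — the sandwiching, the limit $m\to\infty$, and the Noetherian stabilization of the intersection — are soft. Since the theorem is classical and not original to this paper, I would, like the authors, simply cite \cite{D, S74} rather than reproduce the $L^2$ machinery in full.
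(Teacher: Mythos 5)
The paper offers no proof of this theorem at all: it is recalled verbatim as a classical result and attributed to \cite{D, S74}, so your closing decision to quote it wholesale is exactly what the authors do, and at that level your treatment matches the paper. Your sketch of the underlying argument is a fair outline of the standard multiplier-ideal proof in the plurisubharmonic case: the sandwich $\{\nu(\varphi,\cdot)\ge n/\lambda\}\subseteq V(\mathcal{I}(\lambda\varphi))\subseteq\{\nu(\varphi,\cdot)\ge 1/\lambda\}$ from Skoda, coherence via Nadel/H\"ormander, and the Noetherian stabilization of the intersection over $m$ are all correct ingredients. The one step I would flag is your reduction of general bidimension $(k,k)$ to the $(1,1)$ case by pushing forward under generic linear projections $\pi\colon\C^n\to\C^{k+1}$: as stated this is problematic, since $\pi$ is not proper (so $\pi_*T$ is undefined without first cutting off, after which the current is no longer closed), and recovering the exact upperlevel set $E_c(T)$ from the sets $E_c(\pi_*T)$ degrades the constant $c$ because of contributions from other points in the fibers. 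Demailly's actual treatment of general bidimension does not go through projections; it attaches to $T$ a canonical potential (convolution of the trace measure against the kernel $|z-x|^{2k-2n}$) whose Lelong numbers agree with those of $T$, combined with Kiselman's minimum principle to pin down the level $c$ exactly. Since you ultimately cite \cite{D, S74} rather than rely on the sketch, this does not invalidate your proposal, but the projection step should not be presented as the known reduction.
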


\begin{proposition}\label{volume analytic set}
Let $U$ be an open subset of $\mathbb{C}^n$, and let $A_i, i = 1, 2, ...,$ be an  irreducible holomorphic
subvariety of dimension $k$ in $U$. If $A = \bigcup_{i=1}^{\infty}A_i$ is $\mathcal{H}^{2k}$-locally finite, then $A$
is a holomorphic subvariety of $U$.
\end{proposition}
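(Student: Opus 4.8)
The plan is to encode the whole family $\{A_i\}$ into a single closed positive current and then apply Siu's theorem. After discarding repetitions we may assume the $A_i$ are distinct. Since two distinct irreducible $k$-dimensional subvarieties meet in a set of dimension $<k$, we have $\mathcal{H}^{2k}(A_i\cap A_j)=0$ for $i\neq j$, so $\sum_i\mathcal{H}^{2k}(A_i\cap K)=\mathcal{H}^{2k}(A\cap K)$ for every compact $K\subset U$. By the $\mathcal{H}^{2k}$-local finiteness hypothesis this is finite, and since each integration current $[A_i]$ is positive of bidimension $(k,k)$ with $\|[A_i]\|=\mathcal{H}^{2k}\lfloor A_i$ (Wirtinger), the series $T:=\sum_{i=1}^{\infty}[A_i]$ defines a current of locally finite mass with $\|T\|(K)=\mathcal{H}^{2k}(A\cap K)$. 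Each $[A_i]$ is closed, the partial sums converge to $T$ in local mass hence weakly, and $d$ is weakly continuous, so $T$ is closed. Thus $T$ is a closed positive current of bidimension $(k,k)$ on $U$. The use of unit coefficients is deliberate: it is precisely the $\mathcal{H}^{2k}$-local finiteness that makes them summable, and it will force the relevant Lelong numbers to be $\geq 1$.

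Next I would apply Siu's semicontinuity theorem (Theorem \ref{Siu}) to $N:=E_1(T)=\{x\in U: n(T,x)\geq 1\}$, which is therefore a holomorphic subvariety of dimension $\leq k$. I claim the sandwich $A\subseteq N\subseteq\overline{A}$ holds. For $x\in A_i$, monotonicity of Lelong numbers applied to $0\leq[A_i]\leq T$ gives $n(T,x)\geq n([A_i],x)=\mathrm{mult}_x(A_i)\geq 1$, so $A\subseteq N$. Conversely $N\subseteq\operatorname{spt}(T)=\overline{A}$, since the Lelong number vanishes off the support, and the support of a convergent sum of positive currents is the closure of the union of the supports.

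It then remains to upgrade the sandwich to $N=A$. Each $A_i\subseteq N$ is irreducible of dimension $k$, hence is contained in a single irreducible component of $N$ which, having dimension $\leq k$ and containing the $k$-dimensional $A_i$, must equal $A_i$; so every $A_i$ is a $k$-dimensional component of $N$. For the reverse, take an irreducible component $Z$ of $N$ and a generic point $y\in Z$ lying on no other component, so that $N$ coincides with $Z$ on a small ball $B$ about $y$. Since $y\in N\subseteq\overline{A}$ there are points $q_m\to y$ with $q_m\in A_{i(m)}\subseteq N$, and for large $m$ the piece $A_{i(m)}\cap B$ is a nonempty open subset of $A_{i(m)}$ contained in $Z\cap B$. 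If $\dim Z=k$ this forces $A_{i(m)}=Z$, so $Z$ is one of the $A_i$; if $\dim Z<k$ it is impossible, as a nonempty open subset of the $k$-dimensional $A_{i(m)}$ cannot lie in the lower-dimensional $Z$. Hence $N$ has no component of dimension $<k$ and its $k$-dimensional components are exactly the $A_i$, giving $N=\bigcup_i A_i=A$. Therefore $A=E_1(T)$ is a holomorphic subvariety.

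The main obstacle, and the only place where the hypothesis is genuinely used, is the first step: showing that with unit coefficients $T$ has locally finite mass, so that it is a bona fide closed positive current to which Siu's theorem applies. After that the argument is purely geometric. The point requiring care in the last step is the existence of a generic point of each component of $N$ lying off all the other components; this relies on the local finiteness of the irreducible decomposition of the analytic set $N$, which guarantees that near such a point $N$ reduces to a single component and lets the dimension comparison go through.
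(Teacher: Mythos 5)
Your proof is correct, but it takes a genuinely different route from the paper's. The paper argues by contradiction with pure volume counting: if some point $a\in U$ had every neighborhood meeting infinitely many $A_i$, then choosing $a_j\in A_{i_j}$ with $|a-a_j|<r$ and applying Stolzenberg's lower volume bound $\mathcal{H}^{2k}(B_r(a_j)\cap A_{i_j})\geq \Omega(2k)r^{2k}$ (\cite[Theorem B]{S66}) to the balls $B_r(a_j)\subset B_{4r}(a)$, together with $\mathcal{H}^{2k}(A_i\cap A_j)=0$ for $i\neq j$, forces $\mathcal{H}^{2k}(B_{4r}(a)\cap A)=\infty$, contradicting local finiteness; hence the family $\{A_i\}$ is locally finite and $A$ is analytic because a locally finite union of subvarieties is a subvariety. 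You instead assemble the closed positive $(k,k)$-current $T=\sum_i[A_i]$, use the hypothesis to give it locally finite mass, and invoke Siu's semicontinuity theorem (Theorem \ref{Siu}) to identify $A$ with $E_1(T)$ via your sandwich $A\subseteq E_1(T)\subseteq\overline{A}$ and the component analysis, all of whose steps are sound (the key facts you use --- $\|[A_i]\|=\mathcal{H}^{2k}\lfloor A_i$, additivity of Lelong numbers, $n([A_i],x)\geq 1$ on $A_i$, local finiteness of the irreducible decomposition of $E_1(T)$ --- are standard, and indeed your lower bound $n([A_i],x)\geq 1$ rests on essentially the same Stolzenberg/Lelong volume inequality the paper uses directly). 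There is no circularity, since the proposition's proof stays independent of Theorem \ref{generalization of King}, but your route imports into this lemma the heavy machinery that the paper deliberately reserves for the main theorem, where the proposition is then applied; the paper's argument is elementary and self-contained by comparison. What your approach buys in exchange is extra information: it exhibits $A$ itself as the upperlevel set $E_1(T)$ of an explicit closed positive current, and closedness of $A$ comes for free from analyticity of $E_1(T)$ rather than from local finiteness of the family.
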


\begin{proof}
Suppose that there is a point $a \in U$ such that each neighborhood of $a$ meets infinitely many $A_i's$.
Fix any $r > 0$ with $B_{4r}(a) \subset U$. Then $B_r(a)$ meets infinitely many $A_i's$.
Assume that $B_{r}(a)$ meets $A_{i_j}$, $j = 1, 2, ...$.
For each j, there is a point $a_j \in A_{i_j}$ such that $||a - a_j||< r$. Hence $B_r(a_j) \subset B_{4r}(a)$.
By \cite[Theorem B]{S66},
$$
\mathcal{H}^{2k}(B_{4r}(a)\cap A_{i_j}) \geq \mathcal{H}^{2k}(B_r(a_j)\cap A_{i_j})\geq \Omega(2k)r^{2k}
$$
Since
$$
\mathcal{H}^{2k}(B_{4r}(a)\cap A)=\lim_{m\to \infty}\mathcal{H}^{2k}(B_{4r}(a)\cap (\cup^m_{j=1}A_{i_j}))
$$
and $A_{i_j}\cap A_{i_k}$ is of $\mathcal{H}^{2k}$ measure 0 for $j\neq k$, by the inclusion-exclusion principle, we have
\begin{align*}
\mathcal{H}^{2k}(B_{4r}(a)\cap (\cup^m_{j=1}A_{i_j}))&=\mathcal{H}^{2k}(\cup^m_{j=1}(B_{4r}(a)\cap A_{i_j}))=\sum^m_{j=1}\mathcal{H}^{2k}(B_{4r}(a)\cap A_{i_j})\\
&\geq \sum_{j=1}^{m}\Omega(2k)r^{2k}=m\Omega(2k)r^{2k}
\end{align*}
which approaches $\infty$ as $m\rightarrow \infty$.
This is true for all $r$ sufficiently small, therefore $A$ is not $\mathcal{H}^{2k}$-locally finite which is a contradiction.
\end{proof}

\begin{proposition}\label{spt=N}
If $T \in RR^{loc}_{k}(U)$ and $N = \{ x \in U : \Theta^k( \| T \| , x ) > 0 \}$, then $\mbox{spt}(T) = \overline{N}$.
\end{proposition}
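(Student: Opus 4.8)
The plan is to work entirely with the mass measure $\|T\|$, using the standard identity $\mbox{spt}(T) = \mbox{spt}(\|T\|)$, and to extract from Proposition \ref{set with positive density} the fact that $\|T\|$ is carried by $N$. Presenting $T$ through a rectifold $(W, \theta, \overrightarrow{T})$ and invoking Proposition \ref{set with positive density} (together with $\mathcal{H}^k[(W\setminus N)\cup(N\setminus W)]=0$), I would record that for every Borel set $A \subset U$,
$$\|T\|(A) = \int_{A\cap N} \Theta^k(\|T\|, x)\, d\mathcal{H}^k,$$
because $\overrightarrow{T}(x)$ has unit norm and orients the approximate tangent plane. In particular $\|T\|(U\setminus N) = 0$, so $\|T\|$ is concentrated on $N$. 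The whole proposition then reduces to showing $\mbox{spt}(\|T\|) = \overline{N}$.

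First I would establish $\overline{N} \subseteq \mbox{spt}(\|T\|)$. As the support is closed it suffices to show $N \subseteq \mbox{spt}(\|T\|)$. Fix $x \in N$, so $\Theta^k(\|T\|, x) > 0$. If $\|T\|(B_{r_0}(x)) = 0$ for some $r_0 > 0$, then by monotonicity $\|T\|(B_r(x)) = 0$ for every $r \le r_0$, which forces $\|T\|(B_r(x))/(\Omega(k)r^k) \to 0$ and hence $\Theta^k(\|T\|, x) = 0$, a contradiction. Thus $\|T\|(B_r(x)) > 0$ for all $r > 0$, i.e. $x \in \mbox{spt}(\|T\|)$.

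For the reverse inclusion $\mbox{spt}(\|T\|) \subseteq \overline{N}$ I would argue contrapositively: if $x \notin \overline{N}$, then since $\overline{N}$ is closed there is $r > 0$ with $B_r(x)\cap N = \emptyset$, and the concentration of $\|T\|$ on $N$ gives
$$\|T\|(B_r(x)) = \int_{B_r(x)\cap N} \Theta^k(\|T\|, x)\, d\mathcal{H}^k = 0,$$
so $x \notin \mbox{spt}(\|T\|)$. Combining the two inclusions with $\mbox{spt}(T) = \mbox{spt}(\|T\|)$ yields $\mbox{spt}(T) = \overline{N}$.

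The only genuinely substantive step is the concentration statement $\|T\|(U\setminus N) = 0$, and this is exactly what Proposition \ref{set with positive density} provides; the remaining arguments are a routine unwinding of the definitions of the support of a Radon measure and of the $k$-dimensional density. A minor point to be careful about is that $T$ is only locally real rectifiable, so the rectifold representation and the identity $\mbox{spt}(T) = \mbox{spt}(\|T\|)$ should be applied on relatively compact pieces of $U$; since both conclusions are local in nature, this causes no difficulty.
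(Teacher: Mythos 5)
Your proposal is correct and follows essentially the same route as the paper: both hinge on Proposition \ref{set with positive density} to concentrate $\|T\|$ on $N$, and your two inclusions mirror the paper's (your appeal to $\mbox{spt}(T)=\mbox{spt}(\|T\|)$ is exactly the duality $\|T\|(B_r(a))=\sup\{T(w):\|w\|\leq 1,\ \mbox{spt}(w)\subset B_r(a)\}$ that the paper writes out inline, and your monotonicity argument at points of positive density is the same observation phrased more cleanly). Your closing remark about localizing the rectifold representation is a fair point that the paper itself glosses over, and causes no difficulty for the reasons you state.
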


\begin{proof}
By Proposition \ref{set with positive density},
$$T(\varphi) = \int_N \quad \langle \ \varphi(x), \  \overrightarrow{T}(x) \ \rangle \ \Theta^k( \| T \| , x ) \ d\mathcal{H}^k$$
for all $\varphi \in A_c^k(U)$. If $a \notin \overline{N}$, there is a neighborhood $V$ of $a$ such that $V \cap \overline{N} = \emptyset$.
Hence for any $w \in A_c^k(U)$ with $\mbox{spt}(w) \subset V$, we have $T(w) = 0$.
Therefore $V \subset (\mbox{spt}(T))^c$, and this shows that $\overline{N}^c \subset (\mbox{spt}(T))^c$, equivalently, $\mbox{spt}(T) \subset \overline{N}$.
On the other hand, for $a \in N$, $\Theta^k( \| T \| , a ) > 0$ implies that there are infinitely many $r > 0$ such that
$$
\frac{\| T \| (B_r(a))}{\Omega(k)r^k} > 0
$$
For each such $r > 0$, since
$$
\| T \| (B_r(a)) = \mbox{sup}\{T(w) : w \in A_c^k(U) \ \mbox{ with } \ \|w\| \leq 1 \ \mbox{ and }\ \mbox{spt}(w) \subset B_r(a) \}
$$
we can find at least one $ w \in A_c^k(U)$ with $\mbox{spt}(w) \subset B_r(a)$ such that $T(w) > 0$.
This shows that $a \in \mbox{spt}(T)$, and hence $\overline{N} \subset \mbox{spt}(T)$ since $\mbox{spt}(T)$ is closed in $U$.
\end{proof}

We need the following result from \cite[Proposition 3.1.3]{K71}.
\begin{proposition}\label{flat chain subvariety}
If $V$ is a $k$-dimensional holomorphic subvariety of a complex manifold $X$, then for any
closed flat chain $T \in F^{loc}_{2k}(X)$ with $spt(T) \subset V$, $T$ is of the form $\sum^{\infty}_{j=1} a_j[V_j]$ where the
each $V_j$ is a global irreducible component of $V = \bigcup^{\infty}_{j=1} V_j $ and $a_j \in \mathbb{C}$.
\end{proposition}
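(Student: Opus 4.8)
The plan is to reduce everything to the regular part of $V$ by a constancy argument, and then to kill the singular contribution with a support theorem. Write $V=\bigcup_j V_j$ as its (locally finite) decomposition into global irreducible components, and set $\Sigma=\mbox{Sing}(V)\cup\bigcup_{i\neq j}(V_i\cap V_j)$, the union of the singular locus of $V$ with all pairwise intersections of distinct components. Each $V_i\cap V_j$ $(i\neq j)$ and $\mbox{Sing}(V)$ is a holomorphic subvariety of complex dimension $\leq k-1$, so $\Sigma$ is a closed subset of $X$ of real dimension at most $2k-2$; in particular $\mathcal{H}^{2k}(\Sigma)=0$. On the open set $X\setminus\Sigma$ the regular locus $\mbox{Reg}(V)=V\setminus\Sigma$ is the disjoint union of the connected oriented $2k$-dimensional complex manifolds $V_j^{*}:=V_j\setminus\Sigma$, each carrying its canonical complex orientation.

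First I would restrict to $X\setminus\Sigma$. The flat chain $T':=T\lfloor(X\setminus\Sigma)$ is closed and supported on the properly embedded $2k$-manifold $\mbox{Reg}(V)$. Since the pieces $V_j^{*}$ are pairwise separated and the family is locally finite, $T'$ splits as a locally finite sum $T'=\sum_j T_j'$ with $\mbox{spt}(T_j')\subset V_j^{*}$. Each $V_j^{*}$ is connected and oriented, so by Federer's constancy theorem a closed flat $2k$-chain supported on it must be a constant multiple of the current of integration, $T_j'=a_j[V_j^{*}]$ with $a_j\in\mathbb{C}$. Concretely, in a chart a top-dimensional flat chain is integration against an $L^1_{loc}$ density $f$, and $\partial T_j'=0$ forces $df=0$ weakly, whence $f$ is constant on the connected $V_j^{*}$. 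Thus $T'=\sum_j a_j[V_j^{*}]$ on $X\setminus\Sigma$.

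Next I would extend across $\Sigma$. Because each $V_j$ has locally finite $2k$-volume and $\mathcal{H}^{2k}(\Sigma)=0$, the current of integration $[V_j]$ over all of $V_j$ agrees with $[V_j^{*}]$ on $X\setminus\Sigma$, and $[V_j]$ is a closed flat chain on $X$; the sum $\sum_j a_j[V_j]$ is locally finite and so defines a closed flat chain on $X$. Form $R:=T-\sum_j a_j[V_j]$. By construction $R$ is a closed flat $2k$-chain whose restriction to $X\setminus\Sigma$ vanishes, so $\mbox{spt}(R)\subset\Sigma$. Since $\partial R=0$ and $\mathcal{H}^{2k}(\mbox{spt}(R))\leq\mathcal{H}^{2k}(\Sigma)=0$, Federer's support theorem for flat chains yields $R=0$. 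Therefore $T=\sum_j a_j[V_j]$, as claimed.

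The main obstacle is the middle step: one must invoke the constancy theorem to see that a closed top-dimensional flat chain on an oriented manifold is a constant multiple of that manifold, and then the support theorem to force the residual chain on the measure-zero set $\Sigma$ to vanish. Both are standard consequences of Federer's structure theory, but they are precisely the places where flatness (rather than being a mere current) and closedness are indispensable; tracking the complex orientations and the local finiteness of the component decomposition is the routine bookkeeping surrounding them.
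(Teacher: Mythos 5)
Your proof is correct, and it is essentially the argument behind the result as it appears in the literature: the paper itself gives no proof, quoting this directly as King's Proposition 3.1.3, and King's original proof is exactly your scheme --- restrict to the regular locus, apply Federer's constancy theorem to the closed flat chain on each connected oriented component $V_j^{*}$, then annihilate the residual chain $R$ supported in the lower-dimensional set $\Sigma$ via the flat support theorem ($\mathcal{H}^{2k}(\mbox{spt}\,R)=0$ forces $R=0$). The points you flag as delicate (flatness being needed to rule out transverse-derivative currents supported on $V_j^{*}$, and handling complex coefficients componentwise) are precisely the standard ingredients, so there is nothing to correct.
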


Now we give a generalization of King's theorem to real rectifiable currents.

\begin{theorem}\label{generalization of King}
If $T \in RR^{loc}_{k,k}(U)$  is  positive, closed and $N = \{ x \in U : \Theta^{2k}( \| T \| , x ) > 0 \} $
is $\mathcal{H}^{2k}$-locally finite, then $T \in\mathscr{RZ}^{+}_k(U)$.
\end{theorem}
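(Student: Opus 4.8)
The plan is to use Siu's semicontinuity theorem to exhibit the density level sets of $T$ as holomorphic subvarieties, to assemble their top-dimensional parts into a single $k$-dimensional subvariety $V$ using the $\mathcal{H}^{2k}$-local finiteness hypothesis, and then to read off the structure of $T$ from Proposition \ref{flat chain subvariety}. First, since $T \in RR^{loc}_{k,k}(U)$ is positive and closed, the preceding lemma identifies $\Theta^{2k}(\|T\|,a)$ with the Lelong number $n(T,a)$ and guarantees that it exists at every $a \in U$. Consequently $N = \{x : \Theta^{2k}(\|T\|,x)>0\} = \{x : n(T,x)>0\} = \bigcup_{m=1}^{\infty} E_{1/m}(T)$, where $E_c(T) = \{x : n(T,x)\geq c\}$. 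By Siu's semicontinuity theorem (Theorem \ref{Siu}), each $E_{1/m}(T)$ is a holomorphic subvariety of $U$ of dimension $\leq k$.

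Next I would isolate the top-dimensional part. Let $\{V_j\}$ denote the collection of all $k$-dimensional irreducible components occurring among the subvarieties $E_{1/m}(T)$, $m \geq 1$, and set $V = \bigcup_j V_j$. Each $V_j \subset E_{1/m}(T) \subset N$ for some $m$, so $V \subset N$, and hence $V$ inherits $\mathcal{H}^{2k}$-local finiteness from the hypothesis on $N$. Proposition \ref{volume analytic set} then applies and shows that $V$ is a purely $k$-dimensional holomorphic subvariety of $U$. This is precisely the step where the hypothesis that $N$ be $\mathcal{H}^{2k}$-locally finite is indispensable.

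I then claim $\mbox{spt}(T) \subset V$. The key observation is that each $E_{1/m}(T)$ differs from the union of its $k$-dimensional components only by irreducible components of dimension $<k$, which carry zero $\mathcal{H}^{2k}$-measure; taking the countable union gives $\mathcal{H}^{2k}(N \setminus V) = 0$. Using the integral representation of $T$ over $N$ (Proposition \ref{set with positive density}), the mass $\|T\|$ is therefore concentrated on $V$, and since $V$ is closed this yields $\mbox{spt}(T) \subset V$. Being closed with locally finite mass, $T$ is a closed locally flat chain in $F^{loc}_{2k}(U)$, so Proposition \ref{flat chain subvariety} gives $T = \sum_j a_j [V_j]$ with $a_j \in \C$. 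To finish, I would evaluate the density at a generic smooth point of each $V_j$ lying on no other component: there $\Theta^{2k}(\|T\|,x) = |a_j|$, which is positive since $V_j \subset N$. Reality of $T$ (it is real rectifiable, hence real on real test forms) together with the linear independence of the $[V_j]$ forces $a_j \in \R$, and positivity of $T$ and of each $[V_j]$ forces $a_j > 0$. Hence $T \in \mathscr{RZ}^{+}_k(U)$.

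The main obstacle I expect is the passage from the countable family of level-set subvarieties to a single subvariety that carries all of the mass of $T$: one must simultaneously ensure that the $k$-dimensional components genuinely assemble into an honest holomorphic subvariety (which is exactly what Proposition \ref{volume analytic set} delivers from the finiteness hypothesis) and that the lower-dimensional strata of the $E_{1/m}(T)$ are $\|T\|$-negligible so that they can be discarded. Everything downstream — the flat-chain decomposition and the determination of the coefficients as positive reals — is comparatively routine once $\mbox{spt}(T)$ has been pinned inside a purely $k$-dimensional subvariety.
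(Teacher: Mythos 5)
Your proposal is correct and follows essentially the same route as the paper's proof: both identify $N=\{x: n(T,x)>0\}=\bigcup_m E_{1/m}(T)$, apply Siu's semicontinuity theorem (Theorem \ref{Siu}) to make each level set a holomorphic subvariety of dimension $\leq k$, use the $\mathcal{H}^{2k}$-local finiteness of $N$ together with Proposition \ref{volume analytic set} to assemble the $k$-dimensional components into a single subvariety, and conclude via $RR^{loc}\subset F^{loc}$ and Proposition \ref{flat chain subvariety}. The only differences are minor refinements on your part: where the paper invokes King's Measure Support Theorem to assume each $E_n$ is purely $k$-dimensional and uses Proposition \ref{spt=N} to get $\mbox{spt}(T)=\bar{N}=N$, you instead discard the lower-dimensional strata by the direct observation that they are $\mathcal{H}^{2k}$-null (so $\|T\|$ concentrates on the closed set $V$), and you explicitly verify that the coefficients $a_j$ are positive reals via densities at generic points, a step the paper leaves implicit.
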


\begin{proof}
By Proposition \ref{set with positive density}, $N= \{x \in U : n(T , x) > 0 \}$.
Write
$$N = \bigcup_{n=1}^{\infty} E_n \mbox{ where } E_n = \{ x \in U : n(T , x) \geq \frac{1}{n} \}$$
Then by Siu's semicontinuity theorem, $E_n$ is a holomorphic subvariety of $U$ with dimension $\leq k$ for all $n \in \N$.
By the Measure Support Theorem (\cite[Theorem 2.4.2]{K71}), we may assume that each $E_n$ is of purely $k$-dimensional.
By the assumption, $N$ is of $\mathcal{H}^{2k}$-locally finite and hence by Proposition \ref{volume analytic set},
$N$ is a holomorphic subvariety and hence closed in $U$.
By Proposition \ref{spt=N}, $\mbox{spt}(T) = \overline{N} = N$.
Since $RR^{loc}_k(U) \subset F^{loc}_k(U)$, the result follows from Proposition \ref{flat chain subvariety}.
\end{proof}

In the following, we show that King's theorem is a simple consequence of our result.

\begin{corollary}
\begin{enumerate}
\item
Suppose that $T \in RR^{loc}_{k,k}(U)$  is  positive, closed and
$$N = \{ x \in U : \Theta^{2k}( \| T \| , x ) > 0 \}$$
is $\mathcal{H}^{2k}$-locally finite. If $n(T, x) \in \Z (\mbox{ respectively } \Q)$ for all $x\in U$, then $T$ is a holomorphic chain
with positive integral (respectively rational) coefficients.

\item (King's theorem) Suppose that $T\in R^{loc}_{k, k}(U)$ is positive and $d$-closed, then $T$ is a holomorphic chains with integral coefficients.
\end{enumerate}
\end{corollary}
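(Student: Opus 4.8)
The plan is to reduce both parts to the already-established Theorem \ref{generalization of King} and then to read off the coefficients of the resulting holomorphic chain as Lelong numbers at generic points. In both parts the first move is the same: I would verify the hypotheses of Theorem \ref{generalization of King} and conclude that $T$ is a positive real holomorphic $k$-chain, so that
\[
T = \sum_{j=1}^{\infty} r_j [V_j], \qquad r_j \ge 0,
\]
where $V = \bigcup_j V_j$ is a purely $k$-dimensional holomorphic subvariety of $U$ with irreducible components $V_j$. Since $V$ is a subvariety, this decomposition is locally finite, and the entire remaining problem is to pin down the arithmetic nature of the coefficients $r_j$.

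The key local computation is the following. Fix $j$ and choose a point $x_0$ lying in the regular locus of $V_j$ and on no other component $V_i$. Because the singular locus of $V_j$ and each intersection $V_j \cap V_i$ ($i \ne j$) has dimension $< k$, and only finitely many components meet a neighborhood of any point, such $x_0$ form a nonempty open subset of $V_j$ of strictly positive $\mathcal{H}^{2k}$-measure, hence positive $\|T\|$-measure. Near $x_0$ all other components drop out and $T$ coincides with $r_j [V_j]$, the current of integration over the smooth manifold $V_j$. As a smooth complex submanifold has density exactly $1$, the earlier Lemma giving $n(T , x) = \Theta^{2k}(\|T\|, x)$ yields
\[
n(T , x_0) = \Theta^{2k}(\|T\|, x_0) = r_j .
\]
Thus each coefficient is literally a Lelong number of $T$ at a suitable regular point.

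For part (1) this finishes the argument immediately: by hypothesis $n(T , x) \in \Z$ (respectively $\Q$) for every $x \in U$, so in particular $r_j = n(T , x_0) \in \Z$ (respectively $\Q$), and positivity of $T$ forces $r_j \ge 0$. For part (2) I would first note that $R^{loc}_{k,k}(U) \subset RR^{loc}_{k,k}(U)$, and that the extra hypothesis of Theorem \ref{generalization of King} is automatic for genuinely rectifiable currents: writing $\|T\| = \mathcal{H}^{2k}\lfloor \theta$, the multiplicity $\theta$ of a locally rectifiable current is a positive \emph{integer} $\|T\|$-almost everywhere, so $\theta \ge 1$ and hence $\mathcal{H}^{2k}(W \cap B_r(x)) \le \|T\|(B_r(x)) < \infty$ by local finiteness of mass; by Proposition \ref{set with positive density} this makes $N$ automatically $\mathcal{H}^{2k}$-locally finite. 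Theorem \ref{generalization of King} then produces the chain $\sum_j r_j [V_j]$ as above, and at the point $x_0$ the density $\Theta^{2k}(\|T\|, x_0) = r_j$ must be a positive integer precisely because $T$ is rectifiable; hence every $r_j \in \Z_{>0}$, which is King's conclusion.

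The step I expect to be the main obstacle is the local identification $n(T , x_0) = r_j$: it requires checking that near a regular point of $V_j$ avoiding all other components the current really is $r_j [V_j]$, with no contribution from the infinite tail $\sum_{i \ne j} r_i [V_i]$ (here local finiteness of the family $\{V_i\}$ is what rescues us), together with the fact that integration over a smooth complex submanifold has density exactly $1$. Once this identification of coefficients with Lelong numbers is secured, the integrality/rationality in part (1) and the automatic $\mathcal{H}^{2k}$-local finiteness plus integer multiplicity in part (2) both follow with essentially no further work.
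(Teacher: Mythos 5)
Your proposal is correct and follows the same overall strategy as the paper: apply Theorem \ref{generalization of King} to write $T=\sum_j r_j[V_j]$, then identify each coefficient $r_j$ with a Lelong number of $T$ at a suitable point of $V_j$ lying on no other component. Where you differ is in the two supporting steps, and both of your substitutions are sound and arguably tighter. First, to compute the coefficient you restrict to a \emph{regular} point $x_0$ of $V_j$ and use that a smooth complex submanifold has density exactly $1$, so $n(T,x_0)=\Theta^{2k}(\|T\|,x_0)=r_j$; the paper instead takes an arbitrary point $x_j\in V_j$ off the other components and invokes Thie's theorem (\cite[Theorem 4.2.2]{K71}) that $n([V_j],x_j)$ is a positive integer. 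Read literally, the paper's version only yields $a_j=n(T,x_j)/n([V_j],x_j)\in\Q$ from $n(T,x_j)\in\Z$; your insistence on a regular point (where the multiplicity is $1$) is exactly what makes the integral case of part (1) go through, so your argument is the more careful one. Second, for King's theorem you obtain the $\mathcal{H}^{2k}$-local finiteness hypothesis directly from integer multiplicity: $\theta\geq 1$ almost everywhere gives $\mathcal{H}^{2k}\lfloor W\leq\|T\|$, so local finiteness of mass transfers to $W$ and then to $N$ via Proposition \ref{set with positive density}; the paper instead cites \cite[Lemma 1.14]{HS74}. Your route is more elementary and self-contained, and it also sidesteps the paper's loose claim that part (2) ``follows from (1)'': applying (1) verbatim would require $n(T,x)\in\Z$ at \emph{every} point of $U$, whereas rectifiability gives integer densities only almost everywhere. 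You instead conclude integrality of each $r_j$ directly; the one small imprecision is your assertion that the density at the chosen $x_0$ is an integer ``because $T$ is rectifiable'' --- strictly this holds only almost everywhere, but since the density equals $r_j$ at \emph{all} points of your positive-measure set of good points, picking $x_0$ where the multiplicity is an integer repairs this instantly.
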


\begin{proof}
\begin{enumerate}
\item By Theorem \ref{generalization of King}, $T=\sum^m_{j=1} a_j[V_j]$ is a holomorphic chain for some positive real numbers $a_j's$ and irreducible
$k$-dimensional holomorphic subvarieties $V_j's$ of $U$. For each $j$, choose $x_j\in V_j-\cap_{i\neq j}V_i$. Then
$n(T, x_j)=a_jn([V_j], x_j)$ and by Thie's theorem (\cite[Theorem 4.2.2]{K71}), $n([V_j], x_j)$ is a positive integer.
This implies that if $n(T, x_j) \in \Z (\mbox{ respectively } \Q)$, then $a_j \in \Z (\mbox{ respectively } \Q)$.

\item
By Lemma \cite[Lemma 1.14]{HS74}, $\mbox{spt}(T)$ has $\mathcal{H}^{2k}$-locally finite measure, so King's theorem follows
from (1).
\end{enumerate}
\end{proof}

\begin{corollary}
Let $U \subset \mathbb{C}^n$ be an open set. If $T \in RR^{loc}_{k,k}(U)$ is a closed, positive real rectifiable current and $n(T , a)$
is either $0$ or larger than $b$, where $b > 0$ for all $a \in U$, then $T$ is a holomorphic chain with real coefficients.
\end{corollary}

\begin{proof}
Because the result is local, we may restrict $U$ to a smaller open subset, also denoted by $U$, such that $\M(T) < \infty$.
Let $N = \{ a \in U : n(T , a) > 0 \}$. Then
$$ T(\varphi) = \int_{N} \ \langle \varphi(x) , \overrightarrow{T}(x) \rangle \ n(T , x) d\mathcal{H}^{2k}(x) $$
Hence
$$ b \mathcal{H}^{2k}(N) \leq \M(T) < \infty. $$
Therefore by Theorem \ref{generalization of King}, $T$ is a holomorphic chain with real coefficients.
\end{proof}

\section{Applications}
Let $X$ be a compact complex manifold. It follows from a well known result of Federer and Fleming \cite{FF60, F69} that the homology $H_*(\mathscr{D}'_{\bullet}(X; \R))$ of the chain complex $(\mathscr{D}'_{\bullet}(X; \R), d)$ is
isomorphic to the singular homology $H_*(X; \R)$ with real coefficients. In the following, $H_*(X; \R)$ denotes the homology of the chain complex $(\mathscr{D}'_{\bullet}(X; \R), d)$.

\begin{proposition}\label{R+dd^c}
Let $X$ be a complex projective manifold of complex dimension $n$ and $e\in A^{n-k, n-k}(X)$ is a $d$-closed form.
If $e$ considered as a current has the following property:
$$e=R+dd^cb$$
where $R$ is a current such that the $(k, k)$-part $R_{k, k}$ of $R$ is positive and $R_{k, k}$ has $\mathcal{H}^{2k}$-locally finite support, then
$e$ is homologous to some algebraic cycles with real coefficients.
\end{proposition}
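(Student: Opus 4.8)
The plan is to reduce the statement to the generalized King theorem (Theorem \ref{generalization of King}) applied to the bidimension-$(k,k)$ part $R_{k,k}$, and then to upgrade the resulting holomorphic chain to an algebraic cycle by Chow's theorem. First I would pass to bidimension-$(k,k)$ components. Since $e\in A^{n-k,n-k}(X)$, the current it defines is of pure bidimension $(k,k)$; because $dd^c$ lowers each bidimension index of a current by one, only the bidimension-$(k+1,k+1)$ part $\beta:=b_{k+1,k+1}$ of $b$ contributes to the $(k,k)$-part of $dd^cb$, so taking the $(k,k)$-part of $e=R+dd^cb$ gives
$$e=R_{k,k}+dd^c\beta .$$
As $dd^c\beta=d(d^c\beta)$ is $d$-exact and $e$ is $d$-closed, the current $R_{k,k}=e-d(d^c\beta)$ is $d$-closed, and $[e]=[R_{k,k}]$ in $H_{2k}(X;\R)$ under the Federer--Fleming identification recalled in Section 4. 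Thus it suffices to prove that $R_{k,k}$ is an algebraic cycle with real coefficients.

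Next I would check that $R_{k,k}$ satisfies the hypotheses of Theorem \ref{generalization of King} in each coordinate chart $U\cong$ open subset of $\C^n$. Positivity is assumed and forces $R_{k,k}$ to have locally finite mass; combined with $dR_{k,k}=0$ this shows $R_{k,k}\in N^{loc}_{2k}(U)$ is locally normal. Since $\mbox{spt}(R_{k,k})$ is $\mathcal{H}^{2k}$-locally finite by hypothesis, Theorem \ref{normal to real rectifiable} yields $R_{k,k}\in RR^{loc}_{k,k}(U)$. Setting $N=\{x:\Theta^{2k}(\|R_{k,k}\|,x)>0\}$, Proposition \ref{spt=N} gives $N\subset\overline N=\mbox{spt}(R_{k,k})$, so $N$ inherits $\mathcal{H}^{2k}$-local finiteness, and all hypotheses are in place.

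For the globalization I would run the argument of Theorem \ref{generalization of King} directly on $X$, which is legitimate since Siu's semicontinuity theorem (Theorem \ref{Siu}) holds on an arbitrary complex manifold. Writing $N=\bigcup_{m=1}^{\infty}E_m$ with $E_m=\{x:n(R_{k,k},x)\ge 1/m\}$, Siu's theorem makes each $E_m$ an analytic subvariety; the Measure Support Theorem lets me take them purely $k$-dimensional, and the $\mathcal{H}^{2k}$-local finiteness of $N$ forces $N$ to be a purely $k$-dimensional analytic subvariety, hence closed, exactly as in Proposition \ref{volume analytic set}. Because $X$ is projective, Chow's theorem upgrades $N$ to an algebraic subvariety. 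Since $RR^{loc}_{k,k}\subset F^{loc}_{2k}$, Proposition \ref{flat chain subvariety} writes $R_{k,k}=\sum_j r_j[N_j]$ over the irreducible algebraic components $N_j$, with $r_j\in\R$; positivity forces $r_j\ge 0$ and compactness of $X$ makes the sum finite. Hence $R_{k,k}$ is an algebraic cycle with real coefficients, and $[e]=[R_{k,k}]$ is represented by it.

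I expect the decisive step to be the middle one: establishing that $R_{k,k}$ is genuinely \emph{locally real rectifiable}, not merely a positive closed current. This is precisely where the $\mathcal{H}^{2k}$-locally finite support hypothesis does its work, being converted through Theorem \ref{normal to real rectifiable} into the rectifiability required before any structure theorem can be applied; the bidimension bookkeeping in the first paragraph and the coefficient identification during globalization (via Lelong numbers) are routine by comparison.
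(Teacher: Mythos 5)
Your proof is correct and follows essentially the same route as the paper's: pass to the bidimension-$(k,k)$ part to get $e=R_{k,k}+dd^cb_{k+1,k+1}$, deduce that $R_{k,k}$ is closed, positive, hence locally normal, apply Theorem \ref{normal to real rectifiable} to obtain real rectifiability, and conclude via Theorem \ref{generalization of King} together with Chow's theorem, the $dd^c$ term being $d$-exact. You are in fact more careful than the paper at two points it leaves implicit: transferring the $\mathcal{H}^{2k}$-local finiteness from $\mbox{spt}(R_{k,k})$ to the density set $N$ via Proposition \ref{spt=N}, and justifying that the local statement of Theorem \ref{generalization of King} (stated for open $U\subset\C^n$) globalizes to the compact manifold $X$.
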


\begin{proof}
Since $R_{k, k}$ is positive, it is normal. The support of $R_{k, k}$ is of $\mathcal{H}^{2k}$-locally finite and $dR_{k, k}=0$, by Theorem \ref{normal to real rectifiable},
$R_{k, k}\in RR_{k, k}(X)$, and by Theorem \ref{generalization of King} and Chow's Theorem, it is an algebraic cycle with real coefficients.
The result follows from the fact that $e=R_{k, k}+dd^cb_{k+1, k+1}$ where $b_{k+1, k+1}$ is $(k+1, k+1)$-part of $b$.
\end{proof}

\begin{theorem}
Let $X$ be a complex projective manifold of dimension $n$ and $e\in A^{n-k, n-k}(X)$.
If $e$ as a $k$-current is homologous to a Lipschitz $2k$-chain $P$ with rational coefficients which is $d$- and $d^c$-closed and the $(k, k)$-part of $P$ is positive, then $e$ is homologous to an algebraic cycle with rational coefficients.
\end{theorem}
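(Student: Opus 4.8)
The plan is to reduce the statement to the positive $(k,k)$-part $P_{k,k}$ of $P$, show by the machinery of Section 3 that $P_{k,k}$ is an algebraic cycle with real coefficients, identify its cohomology class with $[e]$, and then upgrade the real coefficients to rational ones by a linear-algebra argument over $\Q$.

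First I would record the consequences of the hypotheses. Since $e$ is homologous to the cycle $P$, we have $e-P=dS$ for some current $S$, whence $de=dP=0$ and $[e]=[P]$ in $H_{2k}(X;\R)$; moreover $[P]$ lies in the image of $H_{2k}(X;\Q)$ because $P$ has rational coefficients. Writing $d=\partial+\bar\partial$ and $d^{c}$ as the corresponding combination, the conditions $dP=0$ and $d^{c}P=0$ together give $\partial P=\bar\partial P=0$. Decomposing $P=\sum_{p+q=2k}P_{p,q}$ into bidimension components and comparing bidimensions, each $P_{p,q}$ is separately $\partial$- and $\bar\partial$-closed; in particular $P_{k,k}$ is a $d$-closed current of bidimension $(k,k)$, and it is positive by hypothesis.

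Next I would apply Section 3. The support of $P_{k,k}$ is contained in $\mbox{spt}(P)$, hence in the $2k$-skeleton of the triangulation, a finite union of $2k$-simplices and therefore of $\mathcal{H}^{2k}$-locally finite measure; being positive and closed, $P_{k,k}$ is locally normal. By Theorem \ref{normal to real rectifiable}, $P_{k,k}\in RR^{loc}_{k,k}(X)$, and since its positive-density set $N$ satisfies $N\subseteq\mbox{spt}(P_{k,k})$ and is thus $\mathcal{H}^{2k}$-locally finite, Theorem \ref{generalization of King} together with Chow's theorem (exactly as in Proposition \ref{R+dd^c}) shows $P_{k,k}=\sum_{j=1}^{m}a_j[V_j]$ with $a_j>0$ and the $V_j$ irreducible $k$-dimensional algebraic subvarieties of $X$ (finitely many because $X$ is compact).

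Finally I would pin down the class and pass to $\Q$. On the compact projective (hence K\"ahler) manifold $X$, the bidimension decomposition $[P]=\sum_{p+q=2k}[P_{p,q}]$ of the closed current $P$ is precisely the Hodge decomposition, each $[P_{p,q}]$ lying in $H^{n-p,n-q}(X)$; since $[P]=[e]$ and $e$ is a form of pure type $(n-k,n-k)$, all components with $p\neq k$ vanish, giving $[e]=[P]=[P_{k,k}]=\sum_j a_j[V_j]$. Each $[V_j]$ is an integral, hence rational, class and $[e]$ is a rational class lying in the $\R$-span of the $[V_j]$; since a rational vector lying in the real span of rational vectors already lies in their $\Q$-span, there exist $q_j\in\Q$ with $\sum_j q_j[V_j]=[e]$, so $Z=\sum_j q_j V_j$ is an algebraic cycle with rational coefficients homologous to $e$. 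The main obstacle is this rationality step: the King-type theorem only yields real coefficients, and rather than trying to show the Lelong numbers $n(P_{k,k},x)$ are rational (which would invoke the Corollary), the clean route is the linear-algebra observation that $[e]$ is rational while the $[V_j]$ span a $\Q$-subspace. Secondary care is needed to justify that the bidimension decomposition of $[P]$ really is the Hodge decomposition, i.e. that closed currents of pure bidegree represent pure Hodge classes.
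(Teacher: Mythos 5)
Your proof is correct, and its skeleton --- isolate the positive $(k,k)$-part $P_{k,k}$, show it is closed with $\mathcal{H}^{2k}$-locally finite support, run normality $\Rightarrow$ real rectifiability (Theorem \ref{normal to real rectifiable}) $\Rightarrow$ the generalized King theorem (Theorem \ref{generalization of King}) $\Rightarrow$ Chow, then pass from $\R$ to $\Q$ by linear algebra --- is exactly the paper's. The middle reduction, however, is genuinely different. The paper never decomposes $P$ itself: writing $e=P+da$, it observes that $d^c(da)=d^ce-d^cP=0$ (since $e$ is a closed form of pure type and $d^cP=0$), applies the $dd^c$-lemma to obtain $e=P+dd^cb$, and then quotes Proposition \ref{R+dd^c}, whose proof extracts the bidimension-$(k,k)$ part of that identity; this yields both $dP_{k,k}=0$ and the explicit current-level relation $e-P_{k,k}=dd^cb_{k+1,k+1}$, hence $e$ homologous to $P_{k,k}$. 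You instead derive closedness of every component $P_{p,q}$ directly from $\partial P=\bar\partial P=0$ (a cleaner, more direct use of the $d^c$-closedness hypothesis), and you identify $[e]=[P_{k,k}]$ only cohomologically, via Hodge theory for currents: a closed current of pure bidegree represents a pure Hodge class, so comparing the decomposition of $[P]$ with the pure class $[e]$ kills the components with $p\neq k$. That fact (your flagged ``secondary care'' point) is indeed standard --- on a compact K\"ahler manifold the Laplacian, hence the harmonic projection, preserves bidegree, and the harmonic representative of a closed current lies in its de Rham class --- and it is of essentially the same depth as the $dd^c$-lemma for currents that the paper uses, so neither route is more elementary than the other. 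What the paper's route buys is the explicit $dd^c$-exactness of $e-P_{k,k}$ and the reuse of Proposition \ref{R+dd^c} (which is stated for general currents $R$, not just simplicial cycles); what your route buys is that it bypasses the $dd^c$-lemma and that proposition entirely. The rationality endgame is the same observation in both proofs: you phrase it as ``a rational vector in the real span of rational vectors lies in their rational span,'' while the paper packages it as the equality $C_k(X;\Q)=H_{2k}(X;\Q)\cap C_k(X;\R)$.
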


\begin{proof}
By assumption we have
$$e=P+da$$
for some $a\in \mathscr{D}'_{2(k+1)}(X)$.
Since $d^cda=0$, by the $dd^c$-lemma, there is $b\in \mathscr{D}'_{2(k+1)}(X)$ such that
$$da=dd^cb$$
and hence
$$e=P+dd^cb$$

Since $\mathcal{H}^{2k}(\mbox{spt}(P))<\infty$, and by the assumption, its $(k, k)$-part is positive,
hence $P$ fulfills the hypothesis of Proposition \ref{R+dd^c} and therefore $e$ is homologous to an algebraic cycle
with real coefficients.

Note that by the assumption on $e$, $[e]\in H_{2k}(X; \Q)$.
Let
$$C_k(X; \Q)\subset H_{2k}(X; \Q), \ \ \ C_k(X; \R)\subset H_{2k}(X; \R)$$ be the subspaces generated by algebraic cycles with
rational and real coefficients respectively. Since we may find a basis for $C_k(X; \R)$ from algebraic cycles with integral
coefficients, these algebraic cycles also form a basis for $C_k(X; \Q)$, then we have the equality
$$C_k(X; \Q)=H_{2k}(X; \Q)\cap C_k(X; \R)$$
Applying the above observation to our case, we have $[e]\in H_{2k}(X; \Q)\cap C_k(X; \R)$, and hence in $C_k(X; \Q)$.
\end{proof}

In the forthcoming paper \cite{TY3}, we formulate a version of the Hodge conjecture in Bott-Chern cohomology and combine results
from this paper and results from \cite{TY2} to prove it.

\bibliographystyle{amsplain}

\begin{thebibliography}{1}

\bibitem{A97} H. Alexander, Holomorphic chains and the support hypothesis conjecture, JAMS, 10, No. 1(1997), 123-138.

\bibitem{D}   J.-P. Demailly, Complex analytic and differential geometry, lecture notes on the webpage of the author.

\bibitem{DL03} T.-C. Dinh and M.G. Lawrence, Polynomial hulls and positive currents, Ann. Fac. Sci.
Toulouse Math., XII(2003), 317-334.


\bibitem{F65} H. Federer, Some theorems on integral currents, Trans. of AMS, Vol. 117(1965), 43–67.

\bibitem{F69} H. Federer, Geometric measure theory, Springer-Verlag, New York, 1969.

\bibitem{F74} H. Federer, Real flat chains, cochains and variational problem, Vol. 24, No. 4(1974), 351-407.

\bibitem{FF60} H. Federer, W. H. Fleming, Normal and integral currents, Ann. Math., 72, No. 3(1960), 458-520.


\bibitem{FL92} E. Friedlander and H.B. Lawson, A theory of algebraic cocycles, Annals of Math., 136(1992), 361-428.

\bibitem{H77} R. Harvey, Holomorphic chains and their boundaries, Proc. Sym. Pure Math., 30 part 1 (1977), 309-392.

\bibitem{HL75} R. Harvey and H.B. Lawson, On the boundaries of complex analytic
              varieties, Ann. of Math., 102(1975), 233-290.

\bibitem{HL09I} R. Harvey and H.B. Lawson, A introduction to potential theory in calibrated geometry, Amer. J. Math. 131, no. 4 (2009), 893-944.

\bibitem{HL09II} R. Harvey and H.B. Lawson, Duality of positive currents and plurisubharmonic functions in calibrated geometry,
                                          Amer. J. Math. 131, no. 5 (2009), 1211-1240.

\bibitem{HS74} R. Harvey and B. Shiffman, A characterization of holomorphic chains, Ann. of Math., 99, No. 3, (1974), 553-587.

\bibitem{K71} J. R. King, The currents defined by analytic varieties, Acta Math. 127(1971), 185-220.

\bibitem{KP08} S. G. Krantz and H. R. Parks, Geometric integration theory, Birkhauser, 2008.

\bibitem{L75} H. B. Lawson, The stable homology of a flat torus, Math. Scand., 36(1975), 49-73.

\bibitem{L89} H.B. Lawson, Algebraic cycles and homotopy theory, Annals of Math., 129(1989), 253-291.

\bibitem{L83} L. Simon, Lectures on Geometric Measure Theory, Australian National University Centre
               for Mathematical Analysis, Canberra, 1983.

\bibitem{S74} Yum-Tong Siu, Analyticity of sets associated to Lelong
              numbers and the extension of closed positive currents,
              Invent. Math., 27(1974), 1-2, 53-156.

\bibitem{S66} G. Stolzenberg, Volumes, limits, and extensions of analytic varieties, Lecture Notes in Math., Vol. 19, 1966.

\bibitem{TY2} Jyh-Haur Teh, Chin-Jui Yang, A characterization of real holomorphic chains and applications in the study of algebraic cycles, Complex Manifolds, 7(2020), 93-105.

\bibitem{TY3} Jyh-Haur Teh, Chin-Jui Yang, Bott-Chern homology, Bott-Chern differential cohomology and the Hodge conjecture, to appear in the
              Proceedings of the 8th ICCM.
\end{thebibliography}

\begin{itemize}
\item[] Jyh-Haur Teh, Department of Mathematics, Tsing Hua University(Hsinchu), 101 KuangFu Road, Section 2, 30013 Hsinchu, Taiwan.\\
Email address: jyhhaur@math.nthu.edu.tw

\item[] Chin-Jui Yang, Department of Mathematics, Tsing Hua University(Hsinchu), 101 KuangFu Road, Section 2, 30013 Hsinchu, Taiwan.\\
Email address: p9433256@gmail.com
\end{itemize}

\end{document}